\numberwithin{equation}{section}
\theoremstyle{plain}
\newtheorem{cor}[equation]{Corollary}
\newtheorem{lemma}[equation]{Lemma}
\newtheorem{prop}[equation]{Proposition}
\newtheorem{thm}[equation]{Theorem}
\newtheorem{qns}[equation]{Question}
\newtheorem{conv}[equation]{Convention}
\theoremstyle{definition}
\newtheorem{defn}[equation]{Definition}
\newtheorem{exs}[equation]{Examples}
\newcommand{\dlabel}[1]{\ifmmode \text{\ttfamily \upshape [#1] } \else
{\ttfamily \upshape [#1] }\fi \label{#1}}
\newcommand{\Aut}{\operatorname{Aut} }
\newcommand{\Fix}{\operatorname{Fix} }
\begin{document}
\setcounter{page}{1}
\title[On indecomposable solutions to the YBE whose squaring map is a $p$-cycle]
{On indecomposable involutive solutions to the Yang-Baxter equation whose squaring map is a $p$-cycle}

\author{Marco Castelli}
\address{Department of Mathematics and Physics “Ennio De Giorgi”, University of Salento - Via per Arnesano, 73100 Lecce (LE), Italy}
\email{marco.castelli@unisalento.it}

\author{Arpan Kanrar}
\address{Department of Mathematical Sciences, Adamas University, Kolkata, India}
\email{arpan.kanrar.math@gmail.com}

\subjclass[2010]{16T25, 81R50}
\keywords{braces, cycle sets, set-theoretic solutions, Yang-Baxter}

\begin{abstract}
The pioneering work of Rump, which proved Gateva-Ivanova's conjecture concerning the decomposability of square-free solutions to the Yang-Baxter equation, significantly motivated further research into the associated squaring map $T$. This line of inquiry has yielded numerous decomposability theorems based on the underlying structure of $T$. Two seminal questions, posed by Ramírez and Vendramin, ask about the existence of certain indecomposable involutive solutions whose squaring maps are transpositions or $3$-cycles. In this paper, we explore these problems by examining the case where $T$ is a $p$-cycle, for an arbitrary prime number $p$. We provide negative answers to the aforementioned questions under the assumption that the solution has nilpotent permutation group or has prime-power cardinality, providing also some decomposition criteria in this setting. Moreover, we show that, in the particular case of latin solutions, the situation is more rigid. 
 \end{abstract}
\maketitle

\section*{Introduction}
The quantum Yang–Baxter equation first emerged in theoretical physics, in a paper by C.N. Yang \cite{Yang}, and in statistical mechanics, in the work of R.J. Baxter \cite{Baxter}. Later, it was also investigated from a mathematical perspective. In 1992, Drinfel'd \cite{Drinfeld92} proposed the study of \emph{set-theoretical} solutions, namely, solutions that act on a basis of the underlying vector space.  Specifically, a \emph{set-theoretic solution of the Yang-Baxter equation} on a non-empty set $X$ is a pair $\left(X,r\right)$, where 
$r:X\times X\to X\times X$ is a map such that the relation
$$(r\times id_X)(id_X \times r)(r\times id_X)=(id_X \times r)(r\times id_X)(id_X\times r) $$
is satisfied. Set-theoretic solutions yield solutions to the Yang–Baxter equation via linearization. Writing a solution $(X,r)$ as $r\left(x,y\right) = \left(\lambda_x\left(y\right)\rho_y\left(x\right)\right)$, with
$\lambda_x, \rho_x$ maps from $X$ into itself, for every $x\in X$, we say that $(X, r)$ is \emph{non-degenerate} if $\lambda_x,\rho_x\in Sym_X$, for every $x\in X$, and \emph{involutive} if $r^2=id_{X\times X}$.\\
The seminal papers by Gateva-Ivanova and Van Den Bergh  \cite{GI-V} and Etingov, Schedler, and Soloviev \cite{ESS99} inspired many researchers to study involutive, non-degenerate set-theoretic solutions (which we simply refer to as \emph{involutive solutions}). To understand and analyze an involutive solution, a successful approach is based on the notion of decomposable solution. Recall that an involutive solution $(X,r)$ is said to be \emph{decomposable} if $X$ can be expressed as a disjoint union of two non-empty subsets $Y_1$ and $Y_2$ (that form a \emph{decomposition} $\{Y_1,Y_2\}$ of $X$) such that $r(Y_i\times Y_i)\subseteq Y_i\times Y_i$ for all $i\in \{1,2\}$, and $(X,r)$ is called \emph{indecomposable}, otherwise. In some sense, indecomposable solutions are the fundamental building blocks of this theory (see \cite[Section $3$]{ESS99}).\\
Indecomposability of solutions has been extensively studied in the last few years, and several results have been obtained by various techniques. According to \cite[Proposition 2.12]{ESS99} an involutive solution is indecomposable if and only if the \emph{associated permutation group}, i.e. the subgroup of $Sym(X)$ generated by the maps $\lambda_x$, acts transitively on $X$. This fact is of crucial importance for several papers, in which indecomposable solutions whose associated permutation group belongs to a specific class have been extensively studied (see for example \cite{CJO,jedlivcka2021cocyclic,JP, CommAlg}). In \cite[Theorem 7]{cacsp2018}, the indecomposability of solutions obtained by dynamical extensions was completely characterized, while in \cite{CA23IND} and in \cite{ArpanArch} some sufficient conditions, involving the permutation group $\mathcal{G}(X)$, that guarantee the decomposability of an involutive solution were exhibited.
Several mathematicians have focused on the so-called \emph{squaring map} $T$, which is the permutation of $Sym(X)$ given by $T(x):=\lambda_x^{-1}(x)$, for all $x\in X$. The success of this approach is due to the fact that this single permutation contains information about the decomposability of a solution. In this direction, the first paper was authored by Rump \cite{Rump05}, who showed that if the squaring map is the identity map, then the solution must be decomposable. In 2021, Ramírez and Vendramin provided several (in)decomposability theorems based on the cycle structure of $T$ \cite{VenRami22}. In particular, they showed the following results:
\begin{itemize}
    \item[1)] if $T$ is an $n$-cycle, where $n=|X|$, then $(X,r)$ is indecomposable;
    \item[2)] if $T$ is an $(n-1)$-cycle, where $n=|X|$, then $(X,r)$ is decomposable;
    \item[3)] if $T$ is an $(n-2)$-cycle, where $n=|X|$ and is odd, then $(X,r)$ is decomposable;
    \item[4)] if $T$ is an $(n-3)$-cycle, where $n=|X|$ and is coprime with $3$, then $(X,r)$ is decomposable.
\end{itemize}
A striking result, due to C. Mora and Sastriques \cite{CS23}, provides an extension of the previous decomposability criteria and of Rump's result. Indeed, in \cite[Theorem A]{CS23} it was shown that if  the size of $X$ and the order of the map $T$ are coprime, then the solution must be decomposable. Further generalizations to non-involutive solutions are contained in \cite{colazzo2024cabling}. However, the map $T$ does not seem to be merely a tool for identifying the decomposability of a solution. In this regard, in a recent paper \cite{DI25END}  Dietzel proved that involutive solutions of order $p^k$ whose squaring maps are full cycles (and therefore these solutions are indecomposable) belong to the class of involutive solutions of finite \emph{multipermutation level} if $p>2$, and he showed that involutive solutions of order $2^k$ that are not of finite multipermutation level have an iterated-retraction of size $4$. Despite these interesting results, the relation between the structure of an indecomposable solution and its squaring map remains poorly understood. To partially fill this gap, in \cite{VenRami22} Ramírez and Vendramin  computed indecomposable solutions of  size $\leq 10$ and the cycle decompositions of their squaring maps. As noted in \cite[Section $3$]{VenRami22}, among small involutive solutions, there is only one indecomposable solution with a 3-cycle as squaring map and two indecomposable solutions with a transposition as squaring map. Motivated by these pieces evidence, they asked if there are further indecomposable involutive solutions of this type (see Questions 3.18 and 3.19 of \cite{VenRami22}). Moreover, they pointed out that, among the solutions of small order, the squaring maps of the indecomposable ones have ``few" fixed points (see Question 3.17 of \cite{VenRami22} and the comment below).\\
Motivated by this background, in this paper we explore the class of indecomposable solutions whose squaring map is a $p$-cycle, where $p$ is an arbitrary prime number. Our strategy can be summarized as follows. First, we develop a theory of indecomposable solutions with a nilpotent permutation group. As a first application, we provide a decomposability criterion that allow us to detect decomposable solutions that are different from the ones considered in \cite{CS23,CA23IND,ArpanArch}. Then, using an idea implicitly suggested in \cite{CJO}, we make use of the information provided by the imprimitivity blocks of $\mathcal{G}(X)$ to provide our structural results on indecomposable solutions with a $p$-cycle as squaring map.  In this way, we give a negative answer to Questions 3.18 and 3.19 of \cite{VenRami22} under the assumption that the associated permutation group is nilpotent or $X$ has a prime-power cardinality. These two hypotheses seem reasonable, since, by a well-known result due to Etingof, Schedler, and Soloviev, the associated permutation group is always solvable \cite[Theorem 2.15]{ESS99}. In this context, among other results, we will use our results on blocks of imprimitivity to answer positively a question posed by Agata and Alicja Smoktunowicz in \cite{smock} concerning the decomposability of certain cycle sets, assuming that the associated permutation group is nilpotent. Moreover, by the so-called \emph{cabling} technique, if we assume that X has prime-power cardinality, we can drop the nilpotency assumption on the permutation group.  At the end, using different techniques, we examine the case of latin solutions (a class of indecomposable solutions that has recently been studied \cite{BOC25,BKSV}), showing that the situation is even more rigid in this setting. 

\section{Basics on left braces and cycle sets}

In this section, we provide basic definitions and results on left braces and cycle sets that are useful throughout the paper.
\smallskip

Following \cite[Definition 1]{CO14}, a set $B$ endowed with two operations $+$ and $\circ$ is said to be a \textit{left brace} if $(B,+)$ is an abelian group, $(B,\circ)$ is a group, and
$$
    x\circ (y + z) + x
    = x\circ y + x\circ z,
$$
for all $x,y,z\in B$. The group $(B,+)$ will be called the \emph{additive} group of the left brace, while the group $(B,\circ)$ will be called the \emph{multiplicative group.} We will indicate by $-a$ the inverse of an element $a \in B$ with respect to the operation $+$, and by $a^-$ the inverse of an element $a \in B$ with respect to the operation $\circ$. Moreover, the order of an element $b$ of $B$ in the additive group $(B,+)$ (resp. multiplicative group $(B,\circ)$) will be denoted by $o(b)_+$ (resp. $o(b)$).

\begin{exs}\label{esbrace}
\begin{itemize}
%    \item[1)] If $X$ is a cycle set, then one can show that the free abelian group $\mathbb{Z}^X$ gives rise to a left brace $(\mathbb{Z}^X,+,\circ)$, where $(\mathbb{Z}^X,\circ)$ is the group having $X$ has generating set and $x\circ y=\sigma^{-1}_x(y)\circ (\sigma^{-1}_x(y)\cdotp x)$, where $x,y\in X$, as relations.
 %   \item[2)] If $X$ is a cycle set, the associated permutation group $\mathcal{G}(X)$ gives rise to a left brace $(\mathcal{G}(X),+,\circ)$, where $\circ$ is the usual composition in $\mathcal{G}(X)$ (see, for example, \cite[Section 2]{bachiller2015family} for more details). From now on, we will refer to $(\mathcal{G}(X),+,\circ)$ as the \emph{permutation left brace}. \comment{\textcolor{blue}{Forse è da spostare dopo questo esempio}}
    \item[1)] If $(B,+)$ is an abelian group, then the operation $\circ$ given by $x\circ y:=x+y$ give rise to a left brace which we will call \emph{trivial}.
\item[2)] Let $B:=(\mathbb{Z}/p^2\mathbb{Z},+)$ and $\circ$ be the binary operation on $B$ given by $x\circ y:=x+y+p\cdotp x\cdotp y$ (where $\cdotp$ is the ring-multiplication of $\mathbb{Z}/p^2\mathbb{Z}$). Then, $(B,+,\circ)$ is a left brace.\end{itemize}

\end{exs}

%If $(B_1,+,\circ)$ and $(B_2,+',\circ')$ are left braces, a homomorphism $\psi$ between $B_1$ and $B_2$ is a function from $B_1$ to $B_2$ such that $\psi(x+y)=\psi(x)+'\psi(y)$ and $\psi(x\circ y)=\psi(x)\circ'\psi(y)$, for all $x,y\in B_1$. 

\smallskip

\noindent Given a left brace $B$ and $x\in B$, let us denote by $\lambda_x:B\longrightarrow B$ the map from $B$ into itself defined by
\begin{equation}\label{eq:gamma}
    \lambda_x(y):= - x + x\circ y,
\end{equation} 
for all $y\in B$. Let us recall the properties of the maps $\lambda_x$.
%As shown in , these maps have special properties. We recall them in the following proposition.
\begin{prop}\label{action}\cite[Proposition 2]{Rump07},\cite[Lemma 1]{CO14}
Let $B$ be a left brace. Then, the following are satisfied: 
\begin{itemize}
\item[1)] $\lambda_x\in\Aut(B,+)$, for every $x\in B$;
\item[2)] the map $\lambda:B\longrightarrow \Aut(B,+)$, $x\mapsto \lambda_x$ is a group homomorphism from $(B,\circ)$ into $\Aut(B,+)$.
\end{itemize}

\end{prop}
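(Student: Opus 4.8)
The plan is to derive both statements from the single defining identity of a left brace, together with the commutativity of $(B,+)$ and the associativity of $(B,\circ)$, arranging the argument so that the additive-endomorphism property is proved first and bijectivity is extracted afterwards from the multiplicative compatibility. As a preliminary normalization, I would record that the multiplicative unit coincides with the additive unit: writing $1$ for the identity of $(B,\circ)$ and substituting $x=1$ into $x\circ(y+z)+x = x\circ y + x\circ z$ gives $(y+z)+1 = y+z$, whence $1=0$ after cancelling in the abelian group $(B,+)$. Consequently $\lambda_0(y) = -0 + 0\circ y = y$, so $\lambda_0 = \Id_B$; this will be the base case for invertibility.

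Next I would show that each $\lambda_x$ is an additive endomorphism. Rewriting the brace relation as $x\circ(y+z) = x\circ y + x\circ z - x$ and inserting it into the definition gives $\lambda_x(y+z) = -x + x\circ(y+z) = -x + x\circ y + x\circ z - x$; since $(B,+)$ is abelian the two copies of $-x$ can be redistributed to yield $(-x+x\circ y)+(-x+x\circ z) = \lambda_x(y)+\lambda_x(z)$. Thus $\lambda_x$ respects $+$, and this is the only step where the abelian hypothesis is essential.

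Then I would establish the multiplicativity $\lambda_{x\circ y} = \lambda_x\circ\lambda_y$, which simultaneously supplies part 2 and the invertibility still missing from part 1. Using that $\lambda_x$ is additive (hence $\lambda_x(-y) = -\lambda_x(y)$) together with the associativity of $\circ$, one computes $\lambda_x(\lambda_y(z)) = \lambda_x(-y + y\circ z) = \lambda_x(-y) + \lambda_x(y\circ z) = (x - x\circ y) + (-x + (x\circ y)\circ z)$; the terms $x$ and $-x$ cancel, leaving $-(x\circ y) + (x\circ y)\circ z = \lambda_{x\circ y}(z)$. Applying this identity with $y=x^-$ and invoking $\lambda_0=\Id_B$ gives $\lambda_x\circ\lambda_{x^-} = \lambda_{x^-}\circ\lambda_x = \Id_B$, so $\lambda_x$ is bijective with inverse $\lambda_{x^-}$; combined with the endomorphism property this proves $\lambda_x\in\Aut(B,+)$, which is part 1, and the displayed identity is precisely the homomorphism claim of part 2.

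The computations are routine once their order is fixed, so the only genuinely delicate point is the sequencing. Because part 2 asserts that $\lambda$ takes values in $\Aut(B,+)$, it cannot be invoked before part 1 is available; yet the cleanest proof of bijectivity—the harder half of part 1—is exactly the multiplicativity identity. I would therefore prove multiplicativity using only the additive-endomorphism property, which requires no bijectivity, and only then conclude that each $\lambda_x$ is an automorphism.
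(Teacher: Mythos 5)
Your proof is correct and complete: the normalization $1=0$, the additive-endomorphism computation from the brace identity, and the derivation of $\lambda_{x\circ y}=\lambda_x\circ\lambda_y$ (which then yields bijectivity via $y=x^-$) are all valid, and the sequencing issue you flag is handled properly. The paper does not reproduce a proof of this proposition but cites \cite[Proposition 2]{Rump07} and \cite[Lemma 1]{CO14}, and your argument is essentially the standard one given there, so nothing further is needed.
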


\noindent For the following definition, we refer the reader to \cite[pg. 160]{Rump07} and \cite[Definition 3]{CO14}.

\begin{defn}
Let $B$ be a left brace. A subset $I$ of $B$ is said to be a \textit{left ideal} if it is a subgroup of the multiplicative group and $\lambda_x(I)\subseteq I$, for every $x\in B$. Moreover, a left ideal is an \textit{ideal} if it is a normal subgroup of the multiplicative group.
\end{defn}

\noindent %If $I$ is an ideal of a left brace $B$, the quotient set $B/I$ has a canonical left brace structure called the \emph{quotient left brace} of $B$ modulo $I$ (the operations are defined as $(x+I)+(y+I)=x+y+I$ and $(x+I)\circ (y+I)=(x\circ y)+I)$ for every $x,y\in B$). Ideals are nothing other than kernels of homomorphisms, where the kernel of a homomorphism $\psi$ is the set given by $\ker(\psi):=\{x\in B_1|\psi(x)=0\}$. Indeed, the kernel of a homomorphism is always an ideal of a left brace, and conversely an ideal $I$ is the kernel of the canonical projection $B\longrightarrow B/I$.\\
%The set $\{0\}$ is an ideal and it will be called the \emph{trivial} ideal. A left brace $B$ which contains no ideals different from $\{0\}$ and $B$ will be called a \emph{simple} left brace. 
%Other ideals can be obtained by left braces homomorphisms. Indeed, if $B_1$ and $B_2$ are left braces and $\psi$ a homomorphism from $B_1$ to $B_2$, the kernel of $\psi$ is an ideal of $B_1$, where the kernel, which we indicate by $Ker(\psi)$, is the set given by $Ker(\psi):=\{b\in B_1|\psi(b)=0\}$.\\
\noindent A standard ideal of a left brace $B$ is the \emph{socle}, indicated by $Soc(B)$ and given by the kernel of the map $\lambda$. 
%Another useful ideal of a left brace $B$, given in \cite[Corollary of Proposition 6]{Rump07} and indicated by $B^2$, is the one given by the additive subgroup generated by the set $\{x*y\hspace{1mm} |\hspace{1mm} x,y\in B \}$, where $x*y:=-x+x\circ y-y$ for all $x,y\in B$. In general, if $I$ is an ideal of a left brace $B$, then $I*B$ and $B*I$ are subset of $I$, where $I*B$ and $B*I$ are defined in the same way of $B^2$.\\

\medskip

An algebraic structure $(X,\cdot)$ is said to be a \emph{cycle set} if each left multiplication $\sigma_x:X\longrightarrow X$, $y\mapsto x\cdotp y$ is bijective and the following  equality (called the \emph{cycloid equation})
\begin{equation}\label{eqcycleset}
    (x\cdot y)\cdot (x\cdot z)=(y\cdot x)\cdot (y\cdot z),
\end{equation}
holds for all $x,y,z\in X$.  Moreover, a cycle set $(X,\cdot)$ is called \textit{non-degenerate} if the squaring map $T:X\longrightarrow X$, $x\mapsto x\cdot x$ is bijective. 

\begin{exs}\text{ }
\begin{itemize}
 \item[1)] If $X$ is a nonempty set and $\gamma \in Sym(X)$, the binary operation given by $x\cdotp y:=\gamma(y)$ makes $X$ into a non-degenerate cycle set. We will refer to these cycle sets as the \emph{trivial} cycle set induced by the permutation $\gamma$. 
 \item[2)] If $(X,\cdotp),(Y,\bullet)$ are cycle sets, then the algebraic structure $(X\times Y,*)$ given by $(x,y)*(z,t):=(x\cdotp z,y\bullet t)$ is a cycle set which we will call the \emph{direct product} of $X$ and $Y$.
\end{itemize}
\end{exs}

\noindent An useful tool to construct new cycle sets from a given one, introduced in \cite{ESS99}, is the so-called \textit{retract relation}.
Specifically, in \cite{Rump05} Rump showed that the binary relation $\sim_\sigma$ on $X$ given by 
$$x\sim_\sigma y :\Longleftrightarrow \sigma_x = \sigma_y$$ 
for all $x,y\in X$, is a \emph{congruence} of $(X,\cdot)$, i.e. an equivalence relation for which $x\sim_\sigma y$ and $x'\sim_\sigma y'$ imply $x\cdotp x'\sim_\sigma y\cdotp y',$ for all $x,x',y,y'\in X$. In \cite{ESS99} (and independently in \cite{Rump05}) it was shown that the quotient $X/\sim_{\sigma}$, which we denote by $Ret(X)$, is a cycle set, which we will call \emph{retraction} of $(X,\cdot)$. 
A cycle set $X$ is said to be \textit{irretractable} if $Ret(X)=X$, otherwise it is called \textit{retractable}.
%An important class of cycle sets is given by the ones having finite multipermutation level.

%\begin{defn}
%    A cycle set $X$ is said to be of \emph{multipermutation level $n$} if $n$ is the minimal non-negative integer such that $|Ret^n(X)|=1$, where $Ret^n(X) $ is the cycle set defined inductively by $Ret^0(X)=X$ and $Ret^n(X)=Ret(Ret^{n-1}(X))$, for all $n\in \mathbb{N}$.
%\end{defn}

\noindent In a classical manner we can define the notion of cycle sets homomorphism.

\begin{defn}
    Let $X,Y$ be cycle sets. A map $p:X\longrightarrow Y$ is said to be a \emph{homomorphism} between $X$ and $Y$ if $p(x\cdotp y)=p(x)\cdotp p(y)$ for all $x,y\in X$. A surjective homomorphism is called \emph{epimorphism}, while a bijective homomorphism is said to be an \emph{isomorphism}.
\end{defn}

\noindent Two standard permutation groups related to a cycle sets $X$ are the one generated by the set $\{ \sigma_x|\hspace{1mm} x\in X\}$, called the \emph{associated permutation group of $X$} and indicated by $\mathcal{G}(X)$, and the one generated by the set $\{ \sigma_x\sigma_y^{-1}|\hspace{1mm} x,y\in X\}$, called the \emph{displacements group of $X$} and indicated by $Dis(X)$. In this context, our attention will be focused on indecomposable cycle sets.

\begin{defn}
A cycle set $(X,\cdot)$ is said to be \textit{indecomposable} if the permutation group $\mathcal{G}(X)$ acts transitively on $X$. 
\end{defn}

\noindent Epimorphisms between cycle sets have a crucial role in the study of the indecomposable ones, as highlighted in \cite{cacsp2018,COK}. In that regard, the following lemma is of crucial importance for our purposes.
%Following the paper by Vendramin \cite{VE16EXT}, if $X$ is a cycle set, $S$ a set and $\alpha:X\times X\times S\longrightarrow \Sym(S)$, $\alpha(x,y,s)\mapsto\alpha_{(x,y)}(s,-)$ a function such that
%\begin{equation}\label{cociclo}
%\alpha_{(x\cdot y,x\cdot z)}(\alpha_{(x,y)}(r,s),\alpha_{(x,z)}(r,t))=\alpha_{(y\cdot x,y\cdot z)}(\alpha_{(y,x)}(s,r),\alpha_{(y,z)}(s,t)),
%\end{equation}
%for all $x,y,z\in X$ and $r,s,t\in S$, then $\alpha$ is said to be a \textit{dynamical cocycle} and the operation $\cdot$ given by 
%$$
%(x,s)\cdot (y,t):=(x\cdot y,\alpha_{(x,y)}(s,t)),
%$$
%for all $x,y\in X$ and $s,t\in S$ makes $X\times S$ into a cycle set which we denote by $X\times_{\alpha} S$ and we call \textit{dynamical extension} of $X$ by $\alpha$. 
%A dynamical extension $X\times_{\alpha} S$ is called \textit{indecomposable} if $X\times_{\alpha} S$ is an indecomposable cycle set. 
%By results contained in \cite{cacsp2018,VE16EXT}, the following corollary, which is of crucial importance for this paper, follows.

\begin{lemma}[Lemma 1 of \cite{cacsp2018}]\label{dyn}
    Let $X$ be a finite indecomposable cycle set, $Y$ a cycle set and $p:X\longrightarrow Y $ a cycle set epimorphism. Then, the fibers of $p$, i.e. the sets given by $p^{-1}(y):=\{x\in X\mid p(x)=y \}$, have all the same cardinality.
\end{lemma}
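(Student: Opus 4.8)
The plan is to exploit the equivariance of $p$ with respect to the left multiplications and then transport cardinalities using the transitivity guaranteed by indecomposability. First I would record the key compatibility coming from the fact that $p$ is a cycle set homomorphism: for all $x,y\in X$ one has $p(\sigma_x(y))=p(x\cdot y)=p(x)\cdot p(y)=\sigma_{p(x)}(p(y))$, that is, $p\circ\sigma_x=\sigma_{p(x)}\circ p$, where $\sigma_{p(x)}$ denotes the left multiplication by $p(x)$ in $Y$ (a bijection of $Y$, since $Y$ is a cycle set).

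From this identity I would deduce that each generator of $\mathcal{G}(X)$ permutes the fibers of $p$. Concretely, if $p(a)=y$ then $p(\sigma_x(a))=\sigma_{p(x)}(y)$, so $\sigma_x$ maps the fiber $p^{-1}(y)$ \emph{into} the fiber $p^{-1}(\sigma_{p(x)}(y))$. Applying the same reasoning to $\sigma_x^{-1}$, which satisfies $p\circ\sigma_x^{-1}=\sigma_{p(x)}^{-1}\circ p$ because $\sigma_{p(x)}$ is invertible on $Y$, shows that $\sigma_x^{-1}$ maps $p^{-1}(\sigma_{p(x)}(y))$ back into $p^{-1}(y)$. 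Since $\sigma_x$ and $\sigma_x^{-1}$ are mutually inverse bijections of $X$, their restrictions to these fibers are mutually inverse as well, so $\sigma_x$ restricts to a \emph{bijection} $p^{-1}(y)\to p^{-1}(\sigma_{p(x)}(y))$. Composing such maps and their inverses, every element $g\in\mathcal{G}(X)$ carries each fiber of $p$ bijectively onto a fiber of $p$.

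Finally I would invoke indecomposability to finish. Given two fibers $p^{-1}(y_1)$ and $p^{-1}(y_2)$, choose $a\in p^{-1}(y_1)$ and $b\in p^{-1}(y_2)$; since $\mathcal{G}(X)$ acts transitively on $X$, there is $g\in\mathcal{G}(X)$ with $g(a)=b$. As $g$ maps $p^{-1}(y_1)$ bijectively onto some fiber, and that fiber contains $b$, it must equal $p^{-1}(y_2)$ because the fibers partition $X$. Hence $|p^{-1}(y_1)|=|p^{-1}(y_2)|$, and all fibers are equinumerous. The only delicate point in the argument is ensuring that each $\sigma_x$ restricts to a bijection between fibers rather than a mere injection, and this is exactly where the compatibility identity for $\sigma_x^{-1}$ is needed; once that is in place the transitivity step is immediate. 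Note that finiteness of $X$ is not essential for the equality of cardinalities, though it makes the statement a comparison of finite numbers.
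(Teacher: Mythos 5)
Your argument is correct and is essentially the same as the cited one: the identity $p\circ\sigma_x=\sigma_{p(x)}\circ p$ shows the fibers form a system of blocks for the transitive action of $\mathcal{G}(X)$, and blocks of a transitive action are equinumerous. The care you take with $\sigma_x^{-1}$ to upgrade ``injection between fibers'' to ``bijection between fibers'' is exactly the right point to check, and the proof is complete.
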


\noindent Cycle sets are useful to construct new solutions of the Yang-Baxter equation.

\begin{prop}[Propositions 1-2, \cite{Rump05}]\label{corrisp}
Let $(X,\cdot)$ be a cycle set. Then the pair $(X,r)$, where $r(x,y):=(\sigma_x^{-1}(y),\sigma_x^{-1}(y)\cdot x)$, for all $x,y\in X$, is an involutive left non-degenerate solution of the Yang-Baxter equation which we call the associated solution to $(X,\cdot)$. Moreover, this correspondence is one-to-one.
\end{prop}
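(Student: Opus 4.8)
The plan is to write the candidate solution in the standard form $r(x,y)=(\lambda_x(y),\rho_y(x))$, so that here $\lambda_x=\sigma_x^{-1}$ and $\rho_y(x)=\sigma_x^{-1}(y)\cdot x=\lambda_x(y)\cdot x$, and then to check separately the three defining features of an involutive left non-degenerate solution (non-degeneracy, $r^2=id_{X\times X}$, and the braid relation), before establishing that the assignment $(X,\cdot)\mapsto(X,r)$ is a bijection. Left non-degeneracy is immediate: each $\sigma_x$ is bijective by the very definition of a cycle set, hence so is $\lambda_x=\sigma_x^{-1}$.

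For involutivity I would compute $r^2$ directly. Setting $a:=\sigma_x^{-1}(y)$ and $b:=a\cdot x=\sigma_a(x)$, one has $r(x,y)=(a,b)$, and applying $r$ again gives $r(a,b)=(\sigma_a^{-1}(b),\sigma_a^{-1}(b)\cdot a)$. The point is that $\sigma_a^{-1}(b)=\sigma_a^{-1}\sigma_a(x)=x$, and then $x\cdot a=\sigma_x(a)=\sigma_x\sigma_x^{-1}(y)=y$, so that $r(a,b)=(x,y)$ and $r^2=id_{X\times X}$. This step is routine and uses only that the maps $\sigma_x$ are invertible.

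The main obstacle is the braid relation, and this is exactly where the cycloid equation enters. My plan is first to record the identity forced by involutivity, namely $\rho_y(x)=\lambda_{\lambda_x(y)}^{-1}(x)$ (read off from $r(r(x,y))=(x,y)$), and then to invoke the standard fact that, for a non-degenerate involutive $r$, the braid relation is equivalent to the single operator identity $\lambda_x\lambda_y=\lambda_{\lambda_x(y)}\lambda_{\rho_y(x)}$, the remaining component equations following formally from this one together with involutivity. It then remains to translate this identity into cycle-set language. Substituting $\lambda_x=\sigma_x^{-1}$ and taking inverses turns it into $\sigma_y\sigma_x=\sigma_{\sigma_{\sigma_x^{-1}(y)}(x)}\,\sigma_{\sigma_x^{-1}(y)}$; after replacing $y$ by $x\cdot y$ (equivalently, replacing $\sigma_x^{-1}(y)$ by $y$) and evaluating at an arbitrary $z\in X$, this becomes precisely
$$(x\cdot y)\cdot(x\cdot z)=(y\cdot x)\cdot(y\cdot z),$$
that is, the cycloid equation \eqref{eqcycleset}. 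Thus the braid relation holds if and only if $(X,\cdot)$ satisfies \eqref{eqcycleset}, which it does by hypothesis. I expect the bookkeeping in the reduction to the single operator identity to be the most delicate part, since one must keep careful track of which component equation corresponds to which cycle-set identity.

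Finally, for the one-to-one claim I would exhibit the inverse construction: given an involutive left non-degenerate solution $(X,r)$ with $r(x,y)=(\lambda_x(y),\rho_y(x))$, set $x\cdot y:=\lambda_x^{-1}(y)$. Non-degeneracy makes each $\sigma_x=\lambda_x^{-1}$ bijective, and running the translation of the previous paragraph backwards shows that the braid relation for $r$ forces \eqref{eqcycleset}, so $(X,\cdot)$ is a cycle set. That the two assignments are mutually inverse is then formal: the cycle set built from $r$ has $\sigma_x=\lambda_x^{-1}$, so the solution reconstructed from it again has first component $\sigma_x^{-1}=\lambda_x$; and because involutivity determines $\rho$ from $\lambda$ via $\rho_y(x)=\lambda_{\lambda_x(y)}^{-1}(x)$, the second component is forced to agree as well. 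Hence the correspondence is a bijection, as claimed.
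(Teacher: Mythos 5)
The paper states this proposition purely as a citation of Rump's Propositions 1--2 and gives no proof of its own, and your argument is correct and is essentially the standard proof from that source: involutivity by direct computation, and the braid relation reduced to the operator identity $\lambda_x\lambda_{\lambda_x^{-1}(y)}=\lambda_y\lambda_{\lambda_y^{-1}(x)}$, which under the substitution $\lambda_x=\sigma_x^{-1}$ is precisely the cycloid equation \eqref{eqcycleset}. The one step you invoke as a black box --- that for an involutive left non-degenerate $r$ the full braid relation collapses to that single operator identity, the other component equations following from involutivity --- is a standard lemma from \cite{ESS99}/\cite{Rump05}, so the proposal stands as written.
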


\noindent Of course, under the previous correspondence, indecomposable cycle sets correspond to indecomposable solutions.

\begin{conv}
    From now on, every cycle set will be assumed to be non-degenerate. All the results will be given in the language of cycle sets, but they can be translated by Proposition \ref{corrisp}.
\end{conv}

Let $B$ be a left brace. A \emph{cycle base} is a subset $X$ of $B$ that is a union of orbits with respect to the $\lambda$-action and that generates the additive group $(B,+)$. If a cycle base is a single orbit, then is called \emph{transitive cycle base}. By \cite[Theorem 3]{RumpForum}, every indecomposable cycle sets can be constructed by transitive cycle bases of left braces; on the other hand, every cycle set $X$ gives rise to a left brace structure on $\mathcal{G}(X)$ having as $\circ$ operation the usual maps composition and with sum induced by the rule $\sigma_x^{-1}+\sigma_y^{-1}:=\sigma_x^{-1}\circ \sigma_{\sigma_x(y)}^{-1}$ for all $x,y\in X$ (see Section $1$ of \cite{RumpForum}). 
\begin{prop}[Theorem 3, \cite{RumpForum}]\label{cosmod}
Let $(B,+,\circ)$ be a left brace, $Y\subset B$ a transitive cycle base, $a\in Y$,
and $K$ a core-free subgroup of $(B,\circ)$, contained in the
stabilizer  $B_{a}$ of $a$ (respect to the action $\lambda$). Then, the pair $(X,\cdotp)$ given by $X:=B/K$ and $\sigma_{x\circ K}(y\circ K):=\lambda_x(a)^-\circ y \circ K$ gives rise to an indecomposable cycle set with $ \mathcal{G}(X)\cong B$.\\
 Conversely, every indecomposable cycle set $(X,\cdotp)$ can be obtained in this way by taking $B:=\mathcal{G}(X)$.
\end{prop}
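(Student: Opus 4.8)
The plan is to prove the two directions separately. For the direct part I must show that $X:=B/K$ with the stated operation is a well-defined, non-degenerate, indecomposable cycle set satisfying $\mathcal{G}(X)\cong B$; for the converse I must realise an arbitrary indecomposable cycle set through this recipe.

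I would start the direct part by writing $b_x:=\lambda_x(a)$, so that $\sigma_{x\circ K}$ is left $\circ$-translation of $B/K$ by $b_x^{-}$. Well-definedness in the first variable uses Proposition~\ref{action}: for $k\in K\subseteq B_a$ one has $\lambda_{x\circ k}=\lambda_x\lambda_k$ and $\lambda_k(a)=a$, hence $b_{x\circ k}=b_x$; well-definedness in the second variable is clear since we translate on the left. Each $\sigma_{x\circ K}$ is then a bijection of $X$, being a left translation.

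The decisive point is the cycloid equation, which in operator form is $\sigma_{\sigma_\xi(\eta)}\sigma_\xi=\sigma_{\sigma_\eta(\xi)}\sigma_\eta$ for $\xi=x\circ K$ and $\eta=y\circ K$. Both members are left translations of $B/K$, and because $K$ is core-free two left translations coincide on $B/K$ precisely when their multipliers agree in $B$; this is exactly where the core-free hypothesis is used. Computing the two multipliers and invoking that $\lambda$ is a homomorphism, the identity to be proved collapses to
\[
b_x\circ\lambda_{b_x}^{-1}(b_y)=b_y\circ\lambda_{b_y}^{-1}(b_x).
\]
Rewriting the defining relation $\lambda_p(z)=-p+p\circ z$ as $p\circ z=p+\lambda_p(z)$, the left side equals $b_x+b_y$ and the right side equals $b_y+b_x$, so the cycloid equation reduces to the commutativity of $(B,+)$. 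Non-degeneracy of $T(x\circ K)=b_x^{-}\circ x\circ K$ I would then obtain by a direct check, or by invoking that finite cycle sets are automatically non-degenerate. Finally, since $Y$ is a transitive cycle base the multipliers $\{b_x : x\in B\}$ fill out the whole $\lambda$-orbit $Y$, so $\mathcal{G}(X)$ is the image of the $\circ$-subgroup $\langle Y\rangle_\circ$ acting on $B/K$ by left translation; using $y\circ y'=y+\lambda_y(y')$, the $\lambda$-invariance of $Y$, and finiteness one checks $\langle Y\rangle_\circ=B$. Hence the action is the (transitive) coset action, giving indecomposability, and it is faithful by core-freeness, yielding $\mathcal{G}(X)\cong B$; the additive structures are matched by the same formula.

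For the converse I would set $B:=\mathcal{G}(X)$ with the brace structure recalled before the statement, take $Y:=\{\sigma_x^{-1} : x\in X\}$, fix a base point $a_0\in X$, put $a:=\sigma_{a_0}^{-1}\in Y$, and let $K$ be the stabiliser of $a_0$ in the faithful transitive action of $\mathcal{G}(X)$ on $X$ (hence core-free and, as one checks, contained in $B_a$). The orbit map identifies $X$ with $B/K$ as $\mathcal{G}(X)$-sets. The main obstacle is then to verify that this identification carries the intrinsic multiplication $\sigma_x$ of $X$ to the translation prescribed by the recipe; concretely this amounts to the identity $\sigma_{g(a_0)}^{-1}=\lambda_g(\sigma_{a_0}^{-1})$ in $B$ for all $g\in\mathcal{G}(X)$, which links the brace $\lambda$-action to the geometric action of $\mathcal{G}(X)$ on $X$ and is established from the defining formula of $+$ on $\mathcal{G}(X)$ together with the definition of $\lambda$. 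This same identity shows that $Y$ is a single $\lambda$-orbit, hence a transitive cycle base, precisely when $X$ is indecomposable. Once it is in place, the direct computation above shows the reconstructed operation equals the original one.
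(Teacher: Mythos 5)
The paper offers no proof of this proposition: it is quoted as \cite[Theorem 3]{RumpForum}, so there is no internal argument to compare against. Judged on its own, your outline is essentially the standard proof and is sound. The well-definedness check via $\lambda_{x\circ k}=\lambda_x\lambda_k$ and $\lambda_k(a)=a$, the reduction of the cycloid equation to $b_x\circ\lambda_{b_x}^{-1}(b_y)=b_x+b_y=b_y+b_x$, and the key identity $\lambda_g(\sigma_{a_0}^{-1})=\sigma_{g(a_0)}^{-1}$ in the converse (which does follow from the sum rule $\sigma_x^{-1}+\sigma_y^{-1}=\sigma_x^{-1}\circ\sigma_{\sigma_x(y)}^{-1}$, as you indicate) are all correct. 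Two small corrections are in order. First, core-freeness is not what makes the cycloid equation work: there you only need the trivial implication that equal multipliers give equal left translations, and your computation shows the two multipliers are literally equal in $B$; core-freeness enters later, to make the translation action of $B$ on $B/K$ faithful and hence to get $\mathcal{G}(X)\cong B$ (and, in the converse, it holds automatically because $\mathcal{G}(X)\leq Sym(X)$ acts faithfully on $X$). Second, both the non-degeneracy of $T$ and the step $\langle Y\rangle_\circ=B$ lean on finiteness (Rump's theorem from \cite{Rump05} that finite cycle sets are automatically non-degenerate, and the replacement of $-v$ by a positive additive multiple of $v\in Y$); since the statement as printed does not assume $B$ finite, you should either restrict to the finite case, which is all this paper uses, or supply a direct bijectivity argument for $x\circ K\mapsto\lambda_x(a)^-\circ x\circ K$.
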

%\noindent The following result, implicitly contained in \cite{RumpForum}, allows to avoid the construction of the previous proposition to recover indecomposable and irretractable cycle sets.
%\begin{prop}[proof of Theorem 3, \cite{RumpForum}]\label{idtr}
%    Let $X$ be an indecomposable and irretractable cycle set. Then, there exist a transitive cycle base $Z$ of $\mathcal{G}(X)$ such that $X$ is isomorphic, as cycle set, to the cycle set on $Z$ given by $x\cdotp y:=\lambda_x^{-1}(y)$ for all $x,y\in Z$.
%\end{prop}

\noindent A different tool that allows one to construct indecomposable cycle sets starting from a given one is the so-called cabling. 

\begin{defn}
    Let $(X,\cdotp)$ be a cycle set. Define inductively a family of binary operations $\cdotp_k$ on $X$ by $x \cdotp_1 y:=x \cdotp y$ and $x\cdotp_k y:= (x\cdotp_{k-1} x)\cdotp (x\cdotp_{k-1} y)$ for all $k\in \mathbb{N}$ and $x,y\in X$. Then, the pair $(X,\cdotp_{k})$ is called the $k$-\emph{cabled} of $(X\cdotp)$.
\end{defn}

\begin{prop}[Sections $2$ and $3$, \cite{LRV24}]\label{cablind}
    Let $(X,\cdotp)$ be a cycle set and $k\in \mathbb{N}$. Then:
    \begin{itemize}
        \item $(X,\cdotp_k)$ is itself a cycle set;
        \item if $(X,\cdotp)$ is indecomposable and $(k,|X|)=1$, then $(X,\cdotp_k)$ also is indecomposable;
        \item the squaring map of $(X,\cdotp_k)$ coincides with $T^k$;
        \item the left brace $\mathcal{G}(X,\cdotp_k)$  is equal to $k\mathcal{G}(X)=\{kb  \mid  b\in \mathcal{G}(X) \}$.
    \end{itemize}
\end{prop}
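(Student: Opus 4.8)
The plan is to carry everything through the permutation‑group left brace $B:=\mathcal{G}(X)$, locating the cabled left multiplications inside $B$ and reading off all four assertions from the additive structure. Write $\sigma^{(k)}_x$ for the left multiplication $y\mapsto x\cdot_k y$ of $(X,\cdot_k)$. First I would establish, by a single induction on $k$, the two closed forms
\[
x\cdot_k x=T^k(x),\qquad \sigma^{(k)}_x=\sigma_{T^{k-1}(x)}\circ\cdots\circ\sigma_{T(x)}\circ\sigma_x .
\]
The base case is the definition; for the step, the defining relation $x\cdot_k y=(x\cdot_{k-1}x)\cdot(x\cdot_{k-1}y)=\sigma_{T^{k-1}(x)}\big(\sigma^{(k-1)}_x(y)\big)$ gives the second identity, and evaluating it at $y=x$ and telescoping through $\sigma_{T^i(x)}(T^i(x))=T^{i+1}(x)$ gives the first. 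In particular each $\sigma^{(k)}_x$ is a composite of bijections, hence bijective, and the squaring map of $(X,\cdot_k)$ is $T^k$; this already settles the bijectivity needed for the first bullet and the whole third bullet.

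The technical heart is the brace identity
\[
k\,\sigma_x^{-1}=\sigma_x^{-1}\circ\sigma_{T(x)}^{-1}\circ\cdots\circ\sigma_{T^{k-1}(x)}^{-1}=\big(\sigma^{(k)}_x\big)^{-1}\quad\text{in }B,
\]
where $k\,\sigma_x^{-1}$ is the $k$-fold additive multiple. To prove it I would record two facts valid in any such $B$: from $\lambda_c(d)=-c+c\circ d$ one gets the general left-addition rule $c+e=c\circ\lambda_{c^-}(e)$, and from the generating rule for $+$ together with $\lambda$ being a homomorphism one gets $\lambda_g(\sigma_y^{-1})=\sigma_{g(y)}^{-1}$, i.e. $\lambda_g$ acts on labels exactly as the permutation $g$. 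An induction then closes: with $c=k\,\sigma_x^{-1}$ and the inductive hypothesis $(k\,\sigma_x^{-1})^-=\sigma^{(k)}_x$, the addition rule yields $(k{+}1)\sigma_x^{-1}=(k\,\sigma_x^{-1})\circ\sigma_{(k\,\sigma_x^{-1})^-(x)}^{-1}=(\sigma^{(k)}_x)^{-1}\circ\sigma_{T^k(x)}^{-1}=(\sigma^{(k+1)}_x)^{-1}$, using $\sigma^{(k)}_x(x)=T^k(x)$. I expect this identity to be the main obstacle, since it is exactly where the combinatorial cabling must be matched with the additive structure of $B$; finding the correct general addition rule and carrying out the telescoping is the only genuinely delicate point, and everything else is formal.

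Indeed, for the cycloid equation I would use the same rule to compute $k\,\sigma_x^{-1}\circ k\,\sigma_{\sigma^{(k)}_x(y)}^{-1}=k\,\sigma_x^{-1}+k\,\sigma_y^{-1}$; taking $\circ$-inverses turns the required permutation identity $\sigma^{(k)}_{\sigma^{(k)}_x(y)}\circ\sigma^{(k)}_x=\sigma^{(k)}_{\sigma^{(k)}_y(x)}\circ\sigma^{(k)}_y$ into $k\,\sigma_x^{-1}+k\,\sigma_y^{-1}=k\,\sigma_y^{-1}+k\,\sigma_x^{-1}$, which holds by commutativity of $(B,+)$; with the bijectivity above this proves the first bullet. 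The identity also pins down $\mathcal{G}(X,\cdot_k)$: each generator $\sigma^{(k)}_x$ is the $\circ$-inverse of $k\,\sigma_x^{-1}\in kB$, and $kB$ is a $\circ$-subgroup (in fact a left ideal, since $a\circ b=a+\lambda_a(b)$ and each $\lambda_a$ preserve $kB$), whence $\mathcal{G}(X,\cdot_k)\subseteq kB$; conversely the same computation shows the brace addition of $\mathcal{G}(X,\cdot_k)$ is the restriction of $+$, so its additive group is the subgroup of $(B,+)$ generated by $\{k\,\sigma_x^{-1}\}$, which is all of $kB$ because $\{\sigma_x^{-1}\}$ generates $(B,+)$. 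This gives the fourth bullet.

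Finally, for indecomposability assume $B$ transitive on $X$ with $n:=|X|=[B:B_a]$ and $\gcd(k,n)=1$, and set $H:=kB=\mathcal{G}(X,\cdot_k)$. Counting $[B:H\cap B_a]$ two ways gives $[B:H]\,d=n\,[B_a:H\cap B_a]$, where $d$ is the size of the $H$-orbit of $a$. Now $[B:H]=\big|(B,+)/k(B,+)\big|$ has only prime divisors of $k$, hence is coprime to $n$; since $n\mid [B:H]\,d$ this forces $n\mid d$, and as $d\le n$ we conclude $d=n$, so $H$ is transitive and $(X,\cdot_k)$ is indecomposable. This yields the second bullet and completes the proof.
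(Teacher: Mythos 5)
The paper offers no proof of this proposition: it is imported verbatim from Sections~2 and~3 of \cite{LRV24}, so there is no internal argument to compare yours against. Your reconstruction is correct, and it follows the same brace-theoretic route as that reference: everything hinges on the identity $\bigl(\sigma^{(k)}_x\bigr)^{-1}=k\,\sigma_x^{-1}$ in $B:=\mathcal{G}(X)$, which you prove by a valid induction using the two standard facts $a+b=a\circ\lambda_{a^-}(b)$ and $\lambda_g(\sigma_y^{-1})=\sigma_{g(y)}^{-1}$; after that, the cycloid equation for $\cdotp_k$ reduces to commutativity of $(B,+)$, the third bullet is the telescoping $\sigma^{(k)}_x(x)=T^k(x)$, and the fourth bullet is the observation that $kB$ is a left ideal containing, and additively generated by, the elements $k\sigma_x^{-1}$. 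Your coprimality argument for the second bullet is also sound: $(B,+)/k(B,+)$ has exponent dividing $k$, so $[B:kB]$ is coprime to $|X|$, and the index identity $[B:kB]\,d=|X|\,[B_a:kB\cap B_a]$ forces the $kB$-orbit of $a$ to be all of $X$. Only two compressed steps merit an extra line of justification: the assertion that the brace addition of $\mathcal{G}(X,\cdotp_k)$ is the restriction of $+$ should be backed by noting that the two $\lambda$-actions on $kB$ are $\circ$-homomorphisms agreeing on the generating set $\{k\sigma_x^{-1}\}$, hence equal, and that $+$ is recovered from $\circ$ and $\lambda$; and the identification of the multiplicative index $[B:kB]$ with the additive one uses that $kB$ is simultaneously an additive and a multiplicative subgroup on the same underlying set, which your left-ideal remark supplies. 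Neither point is a gap.
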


\noindent By \cite[Section 3]{Okcycle}, we know that if $X$ is an indecomposable cycle set and $I$ is an ideal of the left brace $\mathcal{G}(X)$, the $I$-orbits on $X$ give rise to a canonical cycle set structure $X/I$ such that the canonical map $X\mapsto X/I$ is an epimorphism. On the other hand, a cycle set $X$ is said to be \emph{simple} if for every epimorphism $p:X\longrightarrow Y$ we have that $X\cong Y$ or $|Y|=1$. The following proposition provides structural results on simple cycle sets. 

\begin{prop}(\cite[Propositions 4.1 and 4.2]{COK})\label{cok}
    Let $X$ be a simple cycle set. If $|X|>2$, then $X$ is indecomposable, and if $|X|$ is not a prime number, then $X$ is irretractable.
\end{prop}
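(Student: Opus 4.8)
The plan is to prove both assertions by contraposition, in each case producing an explicit cycle set epimorphism from $X$ onto a cycle set whose size lies strictly between $1$ and $|X|$, thereby contradicting simplicity. Both target structures will be trivial cycle sets induced by a permutation, so that they are automatically cycle sets and the relevant projections are automatically homomorphisms.

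For the indecomposability statement, I would suppose $X$ is decomposable, so that $\mathcal{G}(X)$ has orbits $O_1,\dots,O_k$ with $k\geq 2$. Since every $\sigma_x$ lies in $\mathcal{G}(X)$, it stabilizes each orbit setwise, and hence $x\cdot y=\sigma_x(y)$ lies in the orbit of $y$. Consequently the orbit-projection $p\colon X\to Y$, where $Y:=\{O_1,\dots,O_k\}$ carries the trivial cycle set structure induced by the identity (so $O_i\cdot O_j:=O_j$), is a cycle set epimorphism, because $p(x\cdot y)=p(y)=p(x)\cdot p(y)$. By simplicity either $k=1$ or $X\cong Y$; the former is excluded by $k\geq 2$, and in the latter case $X$ is itself the trivial cycle set induced by the identity on $|X|=k$ points. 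But when $|X|>2$ such a cycle set admits the obvious epimorphism onto the two-element trivial cycle set (every surjection onto a trivial-identity cycle set is a homomorphism), once more contradicting simplicity. Hence $X$ must be indecomposable.

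For the irretractability statement, I would use that the retraction map $\rho\colon X\to Ret(X)$ is a cycle set epimorphism, so by simplicity either $X\cong Ret(X)$ or $|Ret(X)|=1$. In the first case, finiteness of $X$ forces the surjection $\rho$ to be a bijection, hence the congruence $\sim_\sigma$ is trivial and $X$ is irretractable. It then remains to exclude $|Ret(X)|=1$ when $|X|$ is not prime. In that situation all the maps $\sigma_x$ coincide with a single permutation $\sigma$, so $X$ is the trivial cycle set induced by $\sigma$ and $\mathcal{G}(X)=\langle\sigma\rangle$. If $|X|=1$ then $X$ is trivially irretractable; otherwise $|X|$ is composite, so $|X|>2$, and the first part gives that $X$ is indecomposable, forcing $\langle\sigma\rangle$ to act transitively, i.e. $\sigma$ to be an $|X|$-cycle. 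Identifying $X$ with $\mathbb{Z}/|X|\mathbb{Z}$ and $\sigma$ with $+1$, any proper nontrivial divisor $a$ of $|X|$ yields the reduction map $\mathbb{Z}/|X|\mathbb{Z}\to\mathbb{Z}/a\mathbb{Z}$ onto the trivial cycle set induced by $+1$, an epimorphism of intermediate size $1<a<|X|$, contradicting simplicity. Thus $|Ret(X)|=1$ is impossible and $X$ is irretractable.

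The verifications that the two target structures are cycle sets and that the projections preserve the operation are immediate from the definition of the trivial cycle set induced by a permutation. The two points requiring genuine care will be the passage from the abstract isomorphism $X\cong Ret(X)$ to the literal equality $Ret(X)=X$, where finiteness is what upgrades $\rho$ to a bijection, and the bookkeeping ensuring that ``not prime'' is used in the strong form ``composite,'' so that a factorization $|X|=ab$ with $1<a,b<|X|$ exists and produces a genuinely intermediate quotient; the degenerate value $|X|=1$ is handled separately.
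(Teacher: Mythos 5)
Your argument is correct, and it is worth noting that the paper itself offers no proof of this proposition at all: it is quoted verbatim from \cite[Propositions 4.1 and 4.2]{COK}, so there is nothing in the source to compare against line by line. What you have produced is a self-contained elementary proof, and both halves check out. For indecomposability, projecting onto the orbit space of $\mathcal{G}(X)$ equipped with the trivial (identity-induced) cycle set structure is a legitimate epimorphism, since each $\sigma_x$ preserves every orbit; simplicity then forces either transitivity or $X\cong Y$ with all orbits singletons, and in the latter case your further surjection onto a two-element identity cycle set kills the case $|X|>2$. For irretractability, the dichotomy $X\cong Ret(X)$ or $|Ret(X)|=1$ is handled correctly: you rightly flag that finiteness is what upgrades the abstract isomorphism $X\cong Ret(X)$ to the literal equality $Ret(X)=X$ (this is implicit in the paper's standing conventions, where $|X|$ is treated as a natural number), and in the degenerate case all $\sigma_x$ coincide, so the first part forces a full cycle and the reduction $\mathbb{Z}/n\mathbb{Z}\to\mathbb{Z}/a\mathbb{Z}$ for a proper divisor $a$ contradicts simplicity precisely when $n$ is composite. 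The only thing your write-up leaves slightly compressed is the case $X\cong Y$ in the first half: one should say explicitly that an isomorphism forces $|X|=|Y|=k$, hence every orbit is a singleton and every $\sigma_x$ is the identity, which is what licenses the claim that $X$ carries the trivial identity structure; you do assert this, and it is immediate. In short: correct, and more informative than the paper, which delegates the proof entirely to the cited reference.
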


%\begin{prop}(\cite[Theorem 12]{CasP})
%    Let $X$ be an indecomposable cycle set. Then, $X$ is simple if and only if $|X|=p$, for some prime number $p$, or $ Dis(X)$ is the unique minimal ideal of the left brace $\mathcal{G}(X)$, and it acts transitively on $X$.
%\end{prop}

\noindent We close this introductory section by recalling another important invariant of a cycle set. If $X$ is a cycle set, the map $\Omega_d$ from $X^d$ to $X$ (where $X^d$ is the Cartesian product of $X$ taken $d$ times) is inductively given by  $$\Omega_1(x_1)=x_1\hspace{2mm}  \mbox{and} \hspace{2mm}\Omega_d(x_1,...,x_d):=\Omega_{d-1}(x_1,...,x_{d-1})\cdotp \Omega_{d-1}(x_1,...,x_{d-2},x_d)$$
    for all $d\in \mathbb{N}$, $d>1$ and $x_1,...,x_d\in X$.

\begin{defn}
    Let $(X,\cdotp)$ be an involutive solution. Then, $(X,\cdotp)$ has \emph{Dehornoy's class $d$} if $d$ is the minimal positive integer such that $\Omega_{d+1}(x,x,...,x,y)=y$ for all $x,y\in X$.
\end{defn}

\noindent From now on, if $n$ is a natural number, we denote the set of primes dividing $n$ by $\pi(n)$

\begin{prop}[Section 2 of \cite{fe24de} and Theorem $G$ of  \cite{LRV24}]\label{propdeho}
    Let $X$ be a finite indecomposable cycle set and $d$ its Dehornoy's class. Then:
    \begin{itemize}
        \item $\pi(d)=\pi(|\mathcal{G}(X)|)$;
        \item $d$ is equal to the exponent of the additive group of the left brace on $\mathcal{G}(X)$;
        \item $o(T)$ divides $d$;
        \item $\pi(o(\sigma^{-1}_x)_+)=d$ for all $x\in X$.
    \end{itemize}
\end{prop}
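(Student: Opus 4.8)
The plan is to recognise the Dehornoy iterates $\Omega_{k+1}(x,\ldots,x,y)$ as the cabled operations of Proposition \ref{cablind}, and then to read off all four assertions from the description of the cabled brace together with elementary facts about finite abelian groups. First I would prove, by induction on $k$, the bridge identity
$$\Omega_{k+1}(\underbrace{x,\ldots,x}_{k},y)=x\cdotp_k y\qquad\text{for all } x,y\in X.$$
The base case $k=1$ is immediate, since $\Omega_2(x,y)=x\cdotp y=x\cdotp_1 y$. For the inductive step, unfolding the recursion for $\Omega_{k+1}$ with its first $k$ entries equal to $x$ gives $\Omega_{k+1}(x,\ldots,x,y)=\Omega_k(x,\ldots,x)\cdotp\Omega_k(x,\ldots,x,y)$, where the first factor has $k$ equal entries and the second has $k-1$ copies of $x$ followed by $y$; the inductive hypothesis rewrites these as $(x\cdotp_{k-1}x)\cdotp(x\cdotp_{k-1}y)$, which is exactly $x\cdotp_k y$ by the definition of cabling. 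Consequently the Dehornoy class $d$ is the least positive integer for which $x\cdotp_d y=y$ for all $x,y\in X$, that is, the least $d$ for which $(X,\cdotp_d)$ is the trivial cycle set induced by the identity permutation.

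Next I would settle the second and third bullets. The cabling $(X,\cdotp_d)$ is this trivial identity cycle set if and only if all its left multiplications are the identity map, hence if and only if $\mathcal{G}(X,\cdotp_d)=\{\Id\}$. By Proposition \ref{cablind} the brace $\mathcal{G}(X,\cdotp_d)$ equals $d\,\mathcal{G}(X)=\{db\mid b\in\mathcal{G}(X)\}$, so this triviality is equivalent to $db=0$ for every $b\in\mathcal{G}(X)$. Thus $d$ is the least positive integer annihilating the finite abelian group $(\mathcal{G}(X),+)$, i.e.\ its exponent, which is the second bullet (and which simultaneously shows $d$ is finite). The third bullet then follows because the squaring map of $(X,\cdotp_d)$ is $T^d$ by Proposition \ref{cablind}, while the trivial identity cycle set has identity squaring map; hence $T^d=\Id$ and $o(T)\mid d$.

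For the first bullet I would invoke the standard fact that a finite abelian group and its exponent share the same prime divisors: a prime $p$ divides $|\mathcal{G}(X)|$ if and only if, by Cauchy's theorem, there is an element of order $p$, if and only if $p$ divides the exponent. This gives $\pi(d)=\pi(|\mathcal{G}(X)|)$. For the fourth bullet (which I read as the assertion that each $\sigma^{-1}_x$ has additive order $d$), I would leave the purely cabling-theoretic framework and use that, $X$ being indecomposable, its image $\{\sigma^{-1}_x\mid x\in X\}$ is a transitive cycle base of $\mathcal{G}(X)$ by Proposition \ref{cosmod}, so the $\lambda$-action permutes these generators in a single orbit. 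Since each $\lambda_b$ is an automorphism of $(\mathcal{G}(X),+)$ by Proposition \ref{action}, it preserves additive order, and transitivity then forces all the $\sigma^{-1}_x$ to share one common additive order $m$. As these elements generate $(\mathcal{G}(X),+)$, the exponent divides $m$ (the group is generated by elements of order $m$) while $m$ divides the exponent (such an element exists), so $m=d$ and $o(\sigma^{-1}_x)_+=d$ for every $x$.

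I expect the genuinely new input to be concentrated in the bridge identity and in the fourth bullet. The bridge is the conceptual crux: once $\Omega_{k+1}(x,\ldots,x,y)$ is identified with $x\cdotp_k y$, the second and third bullets are immediate corollaries of Proposition \ref{cablind}, and the first bullet is pure abelian group theory. The main obstacle is the fourth statement, since it is not a formal consequence of the cabling description and instead requires exploiting the transitive-cycle-base structure supplied by indecomposability (Proposition \ref{cosmod}) together with the order-preservation of the $\lambda$-automorphisms; the point demanding care is the passage from ``a single $\lambda$-orbit of additive generators, all of equal order $m$'' to the conclusion that this common order must coincide with the exponent.
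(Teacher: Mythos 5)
Your proof is correct. The paper itself gives no argument for this proposition -- it is quoted from Section 2 of \cite{fe24de} and Theorem G of \cite{LRV24} -- and your reconstruction (the bridge identity $\Omega_{k+1}(x,\ldots,x,y)=x\cdotp_k y$, then reading off the exponent, the divisibility $o(T)\mid d$, and the prime sets from $\mathcal{G}(X,\cdotp_d)=d\,\mathcal{G}(X)$, with the transitive-cycle-base/$\lambda$-automorphism argument for the last bullet) is essentially the standard proof from those references. Your reading of the fourth item as $o(\sigma_x^{-1})_+=d$ correctly resolves what is evidently a typo in the statement (a set of primes cannot equal the integer $d$), and that is the form in which the paper later uses it in Proposition \ref{lemppow}.
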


\section{Indecomposable cycle sets with nilpotent permutation group}

In this section, we consider indecomposable cycle sets whose permutation group $\mathcal{G}(X)$ is a direct product of groups having relatively coprime orders. We will show that the structure of these cycle sets is influenced by the structure of the associated left braces.

\smallskip

We start with a lemma that provides a slight extension of \cite[Theorem 19]{smoktunowicz18}.

\begin{lemma}\label{brace}
    Let $(B,+,\circ)$ be a left brace such that $(B,\circ)$ is the direct product of subgroups $B_i$ whose orders are relatively coprime. Then, each $B_i$ is an ideal, and $B$ is a left brace isomorphic to the direct product of $B_i$.
\end{lemma}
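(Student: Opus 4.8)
The plan is to prove that each $B_i$ is an ideal of the left brace $B$ and that $B$ decomposes as a direct product of these ideals. Since we already know $(B,\circ)$ is an internal direct product of the $B_i$ with pairwise coprime orders, the multiplicative structure is settled; the work is to understand the additive group and the $\lambda$-action.

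\textbf{Step 1: the $B_i$ are subgroups of the additive group.} The key observation is that, for a finite left brace, the additive and multiplicative groups share the same order and the same underlying set. Writing $n_i = |B_i|$, the numbers $n_i$ are pairwise coprime, so the additive abelian group $(B,+)$ splits as the (internal) direct sum of its Sylow subgroups, which regroup uniquely into additive subgroups $A_i$ of order $n_i$, one for each $\pi(n_i)$. I would argue that $B_i = A_i$ as subsets: since $(B_i,\circ)$ has order $n_i$ and only involves primes in $\pi(n_i)$, every element of $B_i$ is an element of $B$ whose additive order is an $n_i$-number, hence lies in $A_i$; comparing cardinalities gives $B_i = A_i$. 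Thus each $B_i$ is simultaneously a multiplicative subgroup and an additive subgroup, and $(B,+) = \bigoplus_i A_i = \bigoplus_i B_i$.

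\textbf{Step 2: each $B_i$ is a left ideal, in fact an ideal.} Here I would use Proposition \ref{action}: each $\lambda_x$ is an automorphism of $(B,+)$. An additive automorphism must preserve the characteristic additive subgroup $A_i$ (the set of elements whose additive order divides $n_i$ is clearly $\lambda_x$-invariant), so $\lambda_x(B_i) = \lambda_x(A_i) \subseteq A_i = B_i$ for every $x \in B$. Combined with the fact that $B_i$ is a subgroup of $(B,\circ)$, this shows $B_i$ is a left ideal. Normality of $B_i$ in $(B,\circ)$ follows because a direct factor of coprime order is automatically normal (it is generated by all elements whose multiplicative order is an $n_i$-number, a characteristic property), so each $B_i$ is a genuine ideal.

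\textbf{Step 3: the brace decomposes as a direct product.} Having shown $(B,+) = \bigoplus_i B_i$ and $(B,\circ) = \prod_i B_i$ with each $B_i$ an ideal, it remains to check that the two operations are compatible across factors, i.e.\ that the natural bijection $B \to \prod_i B_i$ is an isomorphism of left braces. This amounts to verifying that for $x = \sum_i x_i$ and $y = \sum_i y_i$ with $x_i, y_i \in B_i$, both $x + y$ and $x \circ y$ are computed componentwise. Additivity is immediate from $(B,+) = \bigoplus_i B_i$. For the $\circ$-operation I would use the left-brace relation together with Step 2: since each $\lambda_x$ preserves every $B_j$, one gets $x \circ y = x + \lambda_x(y)$, and $\lambda_x(y) = \sum_j \lambda_x(y_j)$ with $\lambda_x(y_j) \in B_j$; a short computation using $\lambda_{x_i}|_{B_j} = \mathrm{id}$ for $i \neq j$ (which follows because $\lambda$ restricted to the coprime factor acts trivially on $B_j$, as the induced automorphism has order dividing both $n_i$ and $|\mathrm{Aut}(B_j)|$-relevant constraints) shows that the $B_j$-component of $x \circ y$ depends only on $x_j$ and $y_j$. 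The main obstacle I anticipate is precisely this cross-action vanishing in Step 3: one must rule out nontrivial $\lambda$-actions of one factor on another, and the clean way is to note that $\lambda$ is a multiplicative homomorphism $B_i \to \operatorname{Aut}(B,+)$ whose image must stabilize each $A_j$ and whose order is coprime to $n_j = |A_j|$, forcing the restriction to $A_j = B_j$ to be trivial. Once this is established, the componentwise formulas for both operations give the desired brace isomorphism $B \cong \prod_i B_i$.
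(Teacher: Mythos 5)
There are two genuine gaps in your argument, one in Step 1 and one in Step 3.

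In Step 1 you assert that ``every element of $B_i$ is an element of $B$ whose additive order is an $n_i$-number, hence lies in $A_i$.'' This is exactly the nontrivial content of the first claim of the lemma, and membership in a multiplicative subgroup of order $n_i$ does not justify it: in a finite left brace the additive and multiplicative orders of an element need not have the same prime support. For instance, in a semidirect product brace with additive group $\mathbb{Z}/q\mathbb{Z}\oplus\mathbb{Z}/p\mathbb{Z}$ and multiplicative group a nonabelian semidirect product of order $pq$, an element with both components nonzero has multiplicative order $p$ but additive order $pq$. So knowing $x\in B_i$ tells you nothing a priori about $o(x)_+$, and your identification $B_i=A_i$ is circular. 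The paper closes this gap in the opposite direction: $A_i$ (the unique additive subgroup of order $m_i$) is characteristic in $(B,+)$, hence $\lambda$-invariant, hence a left ideal and therefore a subgroup of $(B,\circ)$ of order $m_i$; since $(B,\circ)$ is solvable by \cite[Theorem 2.15]{ESS99}, Hall's theorem makes $A_i$ conjugate to the Hall subgroup $B_i$, and normality of $B_i$ (it is a direct factor) forces $B_i=A_i$. Some such input (solvability plus Hall, or an equivalent) is needed; it cannot be read off from orders of elements.

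In Step 3 your reason for the vanishing of the cross-action is also invalid: you claim that because the image of $\lambda|_{B_i}$ in $\operatorname{Aut}(A_j,+)$ has order coprime to $|A_j|$, it must be trivial. An automorphism of an abelian group whose order is coprime to the group order need not be the identity --- inversion on $\mathbb{Z}/7\mathbb{Z}$ has order $2$ --- and $|\operatorname{Aut}(A_j,+)|$ will in general share primes with $n_i$. The correct argument exploits the direct product hypothesis directly: for $x\in B_i$, $y\in B_j$ one has $x\circ y=x+\lambda_x(y)=\lambda_x(y)+x$, and rewriting the sum as a $\circ$-product of an element of $B_j$ with an element of $B_i$ (namely $\lambda_x(y)\circ\lambda_{\lambda_x(y)^-}(x)$), the uniqueness of components in the direct product $(B,\circ)=\prod_i B_i$ forces $\lambda_x(y)=y$. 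Once that is in place, your componentwise verification of $\circ$ goes through. As written, however, both the containment $B_i\subseteq A_i$ and the triviality of the cross-action rest on incorrect justifications, so the proposal does not constitute a proof.
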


    \begin{proof}
        We can assume $|B|=m_1m_2\cdots m_r$, where $|B_i|=m_i$. Let $A_i$ be the additive subgroup of order $m_i$, then $A_i$ is a characteristic subgroup, so a left ideal. Since by \cite[Theorem $2.15$]{ESS99} $(B,\circ)$ is solvable, $A_i$ is a conjugate of $B_i$, and the normality of $B_i$ gives $B_i=A_i$. We obtain that each $B_i$ is an ideal. Now, if $i,j\in \{1,...,r\}$ with $i\neq j$, and $x,y\in B$ with $x\in B_i$, $y\in B_j$, we have $x\circ y=x+\lambda_x(y)=\lambda_x(y)+x=\lambda_{x}(y)\circ \lambda_{\lambda_{x}(y)^-}(x)=\lambda_{\lambda_{x}(y)^-}(x)\circ \lambda_x(y)$ and hence $\lambda_x(y)=y$. This gives $x_1\circ ... \circ x_r=x_1+...+x_r$ for all $x_1\in B_1,$..., $x_r\in B_r$, and therefore the statement is proven. 
        \end{proof}

%\begin{lemma}[Theorem 19, \cite{smoktunowicz18}]\label{brace}
%Let $B$ be a finite left brace such that $(B,\circ)$ is nilpotent and suppose that $|B|=p_1^{n_1}\cdots p_m^{n_m}$. Then, $B$ and $\prod\limits_{i}B_{p_i}$ are isomorphic as left braces.
%\end{lemma}

\noindent Below, we give the main result of this section, that extends Theorem $10$ of \cite{CaCaSt20x}.

\begin{thm}\label{nilp}
    Let $X$ be a finite indecomposable cycle set such that the the permutation group is a direct product of $r$ subgroups $B_i$ whose orders are relatively coprime. Then $X$ is isomorphic to the direct product of $r$ cycle sets $X_i$ such that $|X_i|>1$ and $|X_i|$ divides $|B_i|$.
\end{thm}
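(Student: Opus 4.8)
The plan is to combine the structural result on the permutation group (Lemma~\ref{brace}) with the explicit reconstruction of an indecomposable cycle set from its group of left multiplications (Proposition~\ref{cosmod}), and then to identify the resulting object with a direct product of cycle sets built from the individual factors $B_i$.

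First I would invoke Lemma~\ref{brace}: since $\mathcal{G}(X)=(B,\circ)$ is the direct product of the subgroups $B_i$ with pairwise coprime orders, each $B_i$ is an ideal of the left brace $B$, and $B$ decomposes as a direct product of left braces $B_1\times\cdots\times B_r$ (additively and multiplicatively simultaneously, with the $\lambda$-action acting componentwise — this last point is exactly what the computation $\lambda_x(y)=y$ for $x\in B_i$, $y\in B_j$, $i\neq j$ in the proof of Lemma~\ref{brace} gives). Next I would use Proposition~\ref{cosmod} to realize $X$ as $B/K$ for a suitable transitive cycle base $Y$, a base point $a\in Y$, and a core-free subgroup $K\leq B_a$, with $\sigma_{x\circ K}(y\circ K)=\lambda_x(a)^-\circ y\circ K$. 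The strategy is to decompose each of these data along the direct product $B=\prod_i B_i$.

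The key step is to push the coprimality through $K$ and through the cycle base. Because $|B|=\prod_i|B_i|$ with the $|B_i|$ pairwise coprime, the subgroup $K\leq B$ splits as $K=\prod_i K_i$ with $K_i\leq B_i$ (a subgroup of a direct product with coprime factor orders is the direct product of its projections), and likewise the stabilizer condition $K\subseteq B_a$ decomposes componentwise once I write the base point and the orbit structure in coordinates $a=(a_1,\dots,a_r)$. Writing $X_i:=B_i/K_i$, the coset space $B/K$ is in bijection with $\prod_i B_i/K_i=\prod_i X_i$, and because the $\lambda$-action is componentwise, the formula $\sigma_{x\circ K}(y\circ K)=\lambda_x(a)^-\circ y\circ K$ factors as the product of the corresponding formulas on each $X_i$; this is precisely the direct-product cycle set operation $*$. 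I would then check that each $X_i$ is itself a genuine (non-degenerate) cycle set with $|X_i|$ dividing $|B_i|=[B_i:K_i]\cdot|K_i|$ — in fact $|X_i|=[B_i:K_i]$ divides $|B_i|$ — and that $|X_i|>1$, the latter following because if some factor were trivial then the orbit $Y$ (which generates $(B,+)$) could not surject onto that component, contradicting that $B_i$ is a nontrivial ideal arising from the cycle base.

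I expect the main obstacle to be the bookkeeping in the second step: verifying that the transitive cycle base $Y\subseteq B$ and the base point $a$ genuinely decompose as a ``product'' compatibly with $K=\prod_i K_i$, so that the componentwise formulas assemble into the direct-product operation. The coprimality is what makes this work — it forces every relevant subgroup to split as a direct product of its coordinate projections and forces the $\lambda$-cross-terms to vanish — but one must be careful that the orbit generating $(B,+)$ projects onto a transitive cycle base in each $B_i$, and that core-freeness of $K$ descends to core-freeness of each $K_i$ in $B_i$, so that each $X_i$ is indeed indecomposable of the asserted size. Granting the clean componentwise behavior of $\lambda$ from Lemma~\ref{brace}, the remaining verification that $*$ satisfies the cycloid equation and non-degeneracy is routine, since these properties are preserved under direct products.
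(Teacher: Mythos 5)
Your proposal is correct and follows essentially the same route as the paper's proof: decompose the left brace $\mathcal{G}(X)$ via Lemma~\ref{brace}, realize $X$ as $B/K$ via Proposition~\ref{cosmod}, split $K=\prod_i K_i$ using the coprimality of the $|B_i|$, and read off the direct-product cycle set structure from the componentwise behavior of $\lambda$, $\circ$, and inversion. The only cosmetic difference is the justification that $|X_i|>1$: the paper argues directly that $K_i=B_i$ would put the nontrivial normal subgroup $B_i$ inside the core-free subgroup $K$, whereas you route through the cycle base, but both come to the same conclusion.
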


%\begin{thm}\label{nilp}
%    Let $X$ be a finite indecomposable cycle set with nilpotent permutation group. Then, $X$ is isomorphic to the direct product of cycle sets of prime power order. Moreover, $\pi(|X|)=\pi(|\mathcal{G}(X)|)$.
%\end{thm}
\begin{proof}
   By Lemma \ref{brace} the left brace on $B:=\mathcal{G}(X)$ is isomorphic to the direct product of the ideals $B_i$. By Proposition \ref{cosmod}, there exists $a\in B$ such that $X$ can be constructed as a cycle set on $B/K$, where $K$ is a core-free subgroup of $(B,\circ)$ contained in the stabilizer $B_{a}$ of an element $a\in B$. Since the $\lambda$ maps, the inverse operator and the $\circ$ operation are defined component-wise on the ideals $B_i$, and the subgroup $K$ is the direct product of subgroups $K_i$, with $K_i$ a core-free subgroup of $B_i$, we obtain that $X$ is isomorphic to the direct product of $r$ cycle sets $X_i$. Since $K$ is core-free, we cannot have $K_i=B_i$ for some $i$, hence $|X_i|>1$ and $|X_i|$ divides $|B_i|$.
\end{proof}

\noindent Of course, the previous theorem applies when $\mathcal{G}(X)$ is nilpotent and the $B_i$ are the Sylow subgroups. In \cite{ArpanArch} the second author showed that in this case $\pi(|X|)=\pi(|\mathcal{G}(X)|)$. In this context, the following results provide additional information and a further decomposition criteria.

\begin{prop}\label{pitype}
    Let $X$ be a finite indecomposable cycle set with nilpotent permutation group and $x\in X$. Then, $\pi(o(\sigma_x))=\pi(|X|)=\pi(|\mathcal{G}(X)|)=\pi(o(T))$.
\end{prop}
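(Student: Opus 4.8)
The plan is to transfer the question about the multiplicative (permutation) order of $\sigma_x$ to a question about the additive order of $\sigma_x^{-1}$ in the left brace on $B:=\mathcal{G}(X)$, where Proposition \ref{propdeho} gives sharp control, and then to pass back using the direct-product structure forced by nilpotency.

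First I would record the brace-theoretic consequence of nilpotency. A finite nilpotent group is the direct product of its Sylow subgroups, which have pairwise coprime orders, so Lemma \ref{brace} applies with the $B_i$ taken to be the Sylow $p_i$-subgroups: the left brace on $B$ is the direct product $\prod_{i=1}^r B_i$, with $|B_i|$ a power of a distinct prime $p_i$. Inside each factor $B_i$ both $(B_i,+)$ and $(B_i,\circ)$ are $p_i$-groups, and since in any left brace the additive and multiplicative identities coincide, an element of $B_i$ is trivial exactly when its additive order is $1$, exactly when its multiplicative order is $1$. Hence for every $b=(b_1,\dots,b_r)\in B$ one gets $\pi(o(b))=\{p_i : b_i\neq 0\}=\pi(o(b)_+)$; that is, in this nilpotent brace the additive and multiplicative orders of any element have the same prime divisors.

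Next I would invoke Proposition \ref{propdeho} for the indecomposable cycle set $X$ with Dehornoy class $d$: its last item yields $\pi(o(\sigma_x^{-1})_+)=\pi(d)$ for every $x$, and its first item gives $\pi(d)=\pi(|\mathcal{G}(X)|)$. Applying the previous paragraph to $b=\sigma_x^{-1}$ and using $o(\sigma_x)=o(\sigma_x^{-1})$, I obtain
\[ \pi(o(\sigma_x))=\pi(o(\sigma_x^{-1}))=\pi(o(\sigma_x^{-1})_+)=\pi(d)=\pi(|\mathcal{G}(X)|). \]
Finally, $\pi(|\mathcal{G}(X)|)=\pi(|X|)$ follows from Theorem \ref{nilp}, which gives $X\cong X_1\times\cdots\times X_r$ with $1<|X_i|\mid |B_i|$; thus each $|X_i|$ is a positive power of $p_i$ and $|X|=\prod_i|X_i|$ has exactly $\{p_1,\dots,p_r\}=\pi(|\mathcal{G}(X)|)$ as its prime divisors. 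Chaining these equalities proves the statement.

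The step I expect to require the most care is the passage between the multiplicative order of $\sigma_x$ and the additive order of $\sigma_x^{-1}$: this is exactly where nilpotency is essential, since it is the direct-product decomposition of Lemma \ref{brace} that reduces matters to $p_i$-braces, in which the coincidence $0=1$ makes triviality of an element detectable by either order. A secondary point is to read the last item of Proposition \ref{propdeho} correctly; I use it only through the prime-support equality $\pi(o(\sigma_x^{-1})_+)=\pi(d)$, which is what guarantees that $\sigma_x^{-1}$ has a nontrivial component in every Sylow factor $B_i$ and hence that every $p_i$ actually divides $o(\sigma_x)$.
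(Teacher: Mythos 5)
Your proof is correct, and for the first equality it takes a genuinely different route from the paper. Both arguments get $\pi(|X|)=\pi(|\mathcal{G}(X)|)$ from Theorem \ref{nilp}, but for $\pi(o(\sigma_x))=\pi(|\mathcal{G}(X)|)$ the paper argues combinatorially: it reduces via the direct-product decomposition to the case $|X|=p^n$, where $\sigma_x$ cannot be the identity by \cite[Theorem 3.2]{VenRami22} and hence has order a nontrivial power of $p$, and then runs ``an easy inductive argument'' over the prime factors. You instead work entirely inside the left brace on $\mathcal{G}(X)$: Lemma \ref{brace} splits it into Sylow sub-braces $B_i$, in which additive and multiplicative orders are both $p_i$-powers and vanish simultaneously (since the two identities of a brace coincide), so $\pi(o(b))=\pi(o(b)_+)$ for every element; combining this with the Dehornoy-class facts $\pi(o(\sigma_x^{-1})_+)=\pi(d)=\pi(|\mathcal{G}(X)|)$ of Proposition \ref{propdeho} finishes the argument with no induction. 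Your route trades the citation of \cite[Theorem 3.2]{VenRami22} for the heavier import of Proposition \ref{propdeho}, but it makes the role of nilpotency completely transparent (it is exactly what identifies the additive and multiplicative prime supports) and yields the equality with $\pi(|\mathcal{G}(X)|)$ in one step. You also correctly read the last item of Proposition \ref{propdeho} as the prime-support equality $\pi(o(\sigma_x^{-1})_+)=\pi(d)$ (the statement as printed, ``$=d$'', is evidently a typo), and your use of $o(\sigma_x)=o(\sigma_x^{-1})$ in $(\mathcal{G}(X),\circ)$ is legitimate since $\circ$ is composition of permutations. No gaps.
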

\begin{proof}
    Since $\mathcal{G}(X)$ is nilpotent, the second equality follows by Theorem \ref{nilp} (or by \cite[Corollary 3.10]{ArpanArch}). Now, if $|X|=p^n$ for some prime number $p$, then $\sigma_x$ cannot be the identity by \cite[Theorem 3.2]{VenRami22}, and hence it must have order $p^k$ for some $k\in \mathbb{N}$, moreover $T$ can not be the identity by the main theorem of \cite{CS23}, and hence it also must have prime-power order by Proposition \ref{propdeho}.  If $\pi(|X|)>1$, the statement follows by Theorem \ref{nilp} and an easy inductive argument. 
\end{proof}

\begin{cor}\label{prdec}
        Let $X$ be a finite cycle set with nilpotent permutation group and suppose that one of the following conditions hold:
        \begin{itemize}
            \item there is an element $x\in X$ such that $\pi(o(\sigma_x))\neq\pi(|X|)$;
            \item there is an element $x\in X$ such that $\pi(o(\sigma_x))\neq\pi(|\mathcal{G}(X)|)$;
            \item $\pi(o(T))\neq \pi(|X|)$;
            \item $\pi(o(T))\neq \pi(|\mathcal{G}(X)|)$.
        \end{itemize}  
        Then, $X$ is decomposable.
\end{cor}

\noindent In \cite{ArpanArch}, the second author showed that if a cycle set $X$ has nilpotent permutation group and $\pi(|X|)\neq \pi(|\mathcal{G}(X)|)$, then $X$ is decomposable. Proposition \ref{pitype} and the previous corollary  allow us to detect a larger class of decomposable cycle sets, as the following example shows.

\begin{exs}
    Let $(X,\cdotp)$ be the cycle set given by $X:=\{1,...,12\}$, $\sigma_1=\sigma_2:=(1,2)$, and $\sigma_x:=(3,4,5)$ for all $x\in\{3,...,12\}$. Then, $\mathcal{G}(X)$ is the cyclic group of size $6$, $\pi(|X|)=\pi(|\mathcal{G}(X)|)=\{2,3\}$, but the solution is decomposable since $\pi(o(\sigma_1))\neq \{2,3\}$. Moreover, the decomposability of $X$ does not follow from \cite[Theorem $A$]{CS23} because $(o(T),|X|)=6$.
\end{exs}

\section{Indecomposable cycle sets with a p-cycle as squaring map}

In this section, we study indecomposable cycle sets whose squaring map is a $p$-cycle, with an emphasis on the ones having prime-power size. In the first part of this section, we present some general results, culminating in the proof of the main theorems.

\begin{prop}\label{prop:1}
    Let $X$ be an indecomposable finite cycle set such that the squaring map is a $p$-cycle, for a prime number $p$. Then, $X$ is simple. In particular, if $|X|$ is not a prime number, then $X$ is irretractable and $Soc(\mathcal{G}(X))=\{0\}$.
\end{prop}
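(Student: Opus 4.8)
The plan is to make simplicity the heart of the argument and to deduce the two ``in particular'' assertions afterwards from results already recorded in the excerpt. Throughout I write $T$ for the squaring map and let $S\subseteq X$ be its support, so $|S|=p$ and $T$ fixes $X\setminus S$ pointwise. Let $\varphi\colon X\to Y$ be an arbitrary cycle set epimorphism; the goal is to prove that either $X\cong Y$ or $|Y|=1$. Two preliminary facts drive everything. Since $\varphi$ is a homomorphism, $\varphi(x\cdot x)=\varphi(x)\cdot\varphi(x)$, so $\varphi$ intertwines the squaring maps: $\varphi\circ T=T_Y\circ\varphi$, where $T_Y$ denotes the squaring map of $Y$. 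By Lemma \ref{dyn} all fibers of $\varphi$ share a common size $k$, whence $|X|=k|Y|$, and the intertwining relation shows that $T$ permutes the fibers and induces exactly $T_Y$ on $Y$.

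From this I extract the key structural remark. The set $\varphi(S)$ is the $T_Y$-orbit of $\varphi(s_0)$ for any $s_0\in S$, and since $T^p=\mathrm{id}$ its size divides $p$, hence equals $1$ or $p$. Moreover, any fiber disjoint from $S$ is fixed pointwise by $T$, so its image is a fixed point of $T_Y$; therefore $\operatorname{supp}(T_Y)\subseteq\varphi(S)$. I then split into the two cases. If $|\varphi(S)|=p$, then $\varphi(S)$ is a single $T_Y$-orbit of non-fixed points, so $\operatorname{supp}(T_Y)=\varphi(S)$; for every $y\in\varphi(S)$ and every $x\in\varphi^{-1}(y)$ we have $\varphi(Tx)=T_Y(y)\neq y$, forcing $Tx\neq x$ and hence $x\in S$. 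Thus the $p$ pairwise disjoint fibers over $\varphi(S)$, of total cardinality $kp$, are all contained in $S$, giving $kp\le|S|=p$, so $k=1$ and $\varphi$ is a bijection, i.e. $X\cong Y$.

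The remaining case $|\varphi(S)|=1$ is the main obstacle. Here all of $S$ lies in a single fiber and every other fiber is fixed pointwise by $T$, so $T_Y=\mathrm{id}_Y$. I dispose of it by combining two observations: first, epimorphic images of indecomposable cycle sets are indecomposable, because the intertwining $\varphi\circ\sigma_a=\sigma_{\varphi(a)}\circ\varphi$ transports transitivity of $\mathcal{G}(X)$ to $\mathcal{G}(Y)$; second, Rump's theorem states that a cycle set with identity squaring map is decomposable. Thus $Y$ is an indecomposable cycle set with $T_Y=\mathrm{id}_Y$, which is impossible unless $|Y|=1$. This completes the proof of simplicity; I expect the identification $|\varphi(S)|=1\Rightarrow T_Y=\mathrm{id}_Y$ and its interplay with Rump's theorem to be the delicate point, since it is exactly where the primality of $p$ and the cyclic shape of $T$ are used.

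Finally, assume $|X|$ is not prime. Irretractability is then immediate from Proposition \ref{cok} applied to the simple cycle set $X$. For the socle, note that $\operatorname{Soc}(\mathcal{G}(X))$ is an ideal of the brace, so its orbits yield an epimorphism $X\to X/\operatorname{Soc}(\mathcal{G}(X))$; by simplicity this is either an isomorphism or has one-element target. In the isomorphism case the quotient map is a bijection, so each $\operatorname{Soc}$-orbit is a singleton; writing $X=B/K$ with $K$ core-free as in Proposition \ref{cosmod} and using that $\operatorname{Soc}(\mathcal{G}(X))$ is normal, this forces $\operatorname{Soc}(\mathcal{G}(X))\subseteq\operatorname{core}(K)=\{0\}$. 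In the one-element case $\operatorname{Soc}(\mathcal{G}(X))$ acts transitively; since $\operatorname{Soc}(\mathcal{G}(X))=\ker\lambda$ acts within the classes of the relation $\sigma_x=\sigma_y$ (clear from the formula of Proposition \ref{cosmod}), all the maps $\sigma_x$ coincide, so $X$ is a trivial cycle set with $\mathcal{G}(X)=\langle T\rangle$ cyclic of order $p$, and transitivity forces $|X|=p$, contradicting the hypothesis. Hence $\operatorname{Soc}(\mathcal{G}(X))=\{0\}$.
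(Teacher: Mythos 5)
Your proof is correct and follows essentially the same route as the paper's: both reduce to the dichotomy that the squaring map $T_Y$ of a quotient $Y$ has order $1$ or $p$, dispose of the identity case via Rump's theorem (using that quotients of indecomposable cycle sets are indecomposable) and of the order-$p$ case by counting moved points across the equal-size fibers of Lemma~\ref{dyn}. The only divergence is that you derive $Soc(\mathcal{G}(X))=\{0\}$ directly from simplicity via the quotient $X\to X/Soc(\mathcal{G}(X))$, where the paper simply cites \cite{BA15FA}; your argument for that step is sound.
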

\begin{proof}
    Suppose that $X$ is not simple. Let $Y$ be a non-trivial epimorphic image of $X$ and $f$ be an epimorphism from $X$ to $Y$. Then, the order of the squaring map $T_Y$ of $Y$ divides that of the squaring map $T_X$ of $X$, hence $o(T_Y)$ is $1$ or $p$.
    %Be Corollary \ref{dyn}, $X\cong Y\times_{\alpha} S$ for some set $S$ with $|X|>|S|>1$, and $Y$ is clearly indecomposable. Moreover, the order of the squaring map of $Y$ is $1$ or $p$. 
    In the first case, $Y$ is square-free and by \cite[Theorem 1]{Rump05} decomposable, a contradiction. In the second case, since by Lemma \ref{dyn}  the fibers of $f$ are of the same size, we conclude that the squaring map of $X$ cannot be a $p$-cycle since it moves at least $2p$ elements, again a contradiction. Therefore, $X$ is simple. By Proposition \ref{cok} if $|X|$ is not a prime number then $X$ is irretractable, and by \cite{BA15FA} $Soc(\mathcal{G}(X))=\{0\}$.
\end{proof}
%\begin{proof}
%    Suppose that $X$ is not simple and let $Y$ be a non-trivial epimorphic image of $X$. Be Corollary \ref{dyn}, $X\cong Y\times_{\alpha} S$ for some set $S$ with $|X|>|S|>1$, and $Y$ is clearly indecomposable. Moreover, the order of the squaring map of $Y$ is $1$ or $p$. In the first case, $Y$ is square-free and by \cite[Theorem 1]{Rump05} decomposable, a contradiction. In the second case, we have that the squaring map $X$ can not be a $p$-cycle since it moves at least $2p$ elements, again a contradiction. Therefore, $X$ is simple. By Proposition \ref{cok} if $|X|$ is not a prime number then $X$ is irretractable, and by \cite{BA15FA} $Soc(\mathcal{G}(X))=\{0\}$.
%\end{proof}

\noindent A cycle set is said to be of $\pi$-type if  $\pi(|X|)=\pi(|\mathcal{G}(X)|)$.

\begin{prop}\label{cablpi}
    Let $X$ be a finite indecomposable cycle set. Then, there exists a cabled $(X,\cdotp_l)$ of $X$ that is an indecomposable cycle set of $\pi$-type. Moreover, if $T$ is a $p$-cycle for a prime number $p$, then the squaring map of $(X,\cdotp_l)$ is a $p$-cycle. 
\end{prop}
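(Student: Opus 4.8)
The plan is to use the cabling machinery of Proposition~\ref{cablind} together with the arithmetic information about Dehornoy's class recorded in Proposition~\ref{propdeho}. The key observation is that cabling by an integer $l$ coprime to $|X|$ preserves indecomposability and replaces the squaring map $T$ by $T^l$, while replacing the left brace $\mathcal{G}(X)$ by $l\mathcal{G}(X)$. The whole difficulty therefore reduces to choosing $l$ so that $l\mathcal{G}(X)$ has $\pi$-type, i.e. so that $\pi(|X,\cdotp_l|)=\pi(|\mathcal{G}(X,\cdotp_l)|)$, without disturbing the cycle structure of the squaring map when $T$ is already a $p$-cycle.

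First I would set $B:=\mathcal{G}(X)$ and let $d$ be the Dehornoy class of $X$. By Proposition~\ref{propdeho} we have $\pi(d)=\pi(|B|)$, and $d$ equals the exponent of the additive group $(B,+)$. Writing $\pi(|X|)$ and $\pi(|B|)$, the primes in $\pi(|B|)\setminus\pi(|X|)$ are exactly those we must remove by passing to a suitable sub-brace $l\mathcal{G}(X)$. The natural choice is to take $l$ to be the product of the primes in $\pi(|B|)\setminus\pi(|X|)$ raised to a high enough power (equivalently, an appropriate power making $l$ annihilate the corresponding additive Sylow components), and coprime to $|X|$; then $l\mathcal{G}(X)$ kills precisely the additive torsion at those primes. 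I would verify that with this choice $(X,\cdotp_l)$ is indecomposable by the second bullet of Proposition~\ref{cablind} (since $(l,|X|)=1$), that $\mathcal{G}(X,\cdotp_l)=l\mathcal{G}(X)$ by the fourth bullet, and that the remaining primes now satisfy $\pi(|\mathcal{G}(X,\cdotp_l)|)=\pi(|X|)=\pi(|X,\cdotp_l|)$, giving the $\pi$-type property.

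For the second assertion, suppose $T$ is a $p$-cycle. I would first note that by Proposition~\ref{propdeho} we have $o(T)\mid d$ and $\pi(d)=\pi(|B|)$; since $o(T)=p$, the prime $p$ lies in $\pi(|X|)$, so $p\nmid l$ and thus $(l,p)=1$. The squaring map of $(X,\cdotp_l)$ is $T^l$ by the third bullet of Proposition~\ref{cablind}. Because $T$ is a single $p$-cycle and $\gcd(l,p)=1$, the power $T^l$ is again a $p$-cycle (a power of a $p$-cycle by an exponent coprime to $p$ remains a single $p$-cycle), which yields the claim.

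The main obstacle I anticipate is pinning down the correct exponent $l$: one must choose it simultaneously coprime to $|X|$ (to invoke indecomposability and the identification $\mathcal{G}(X,\cdotp_l)=l\mathcal{G}(X)$) and large enough at the ``extra'' primes to collapse the additive Sylow subgroups at those primes, so that $l\mathcal{G}(X)$ genuinely loses them from $\pi(|\mathcal{G}|)$. Making precise that multiplying by such an $l$ removes exactly the primes of $\pi(|B|)\setminus\pi(|X|)$ from the order of the resulting brace—using that $d$ is the additive exponent and the structure in Lemma~\ref{brace} / Theorem~\ref{nilp} is the delicate arithmetic point; everything else is a direct appeal to the cited bullets of Proposition~\ref{cablind} and Proposition~\ref{propdeho}.
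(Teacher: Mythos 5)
Your construction is essentially the paper's: both choose $l$ coprime to $|X|$ so that multiplication by $l$ annihilates the additive Sylow components of $\mathcal{G}(X)$ at the primes in $\pi(|\mathcal{G}(X)|)\setminus\pi(|X|)$ (the paper simply factors $|\mathcal{G}(X)|=l\cdot t$ with $(l,t)=(l,|X|)=1$ and $\pi(t)=\pi(|X|)$), and then both apply the three bullets of Proposition~\ref{cablind}. The appeal to Dehornoy's class is an unnecessary detour but does no harm.

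One step of your argument for the second assertion does not hold up as written: from $o(T)\mid d$ and $\pi(d)=\pi(|\mathcal{G}(X)|)$ you may only conclude $p\in\pi(|\mathcal{G}(X)|)$, not $p\in\pi(|X|)$, and it is the latter that you need to guarantee $p\nmid l$ (if $p$ were one of the ``extra'' primes, your $l$ would be divisible by $p$ and $T^l$ would be the identity rather than a $p$-cycle). The conclusion $p\in\pi(|X|)$ is true, but it needs a different justification: either \cite[Theorem A]{CS23} (an indecomposable solution must satisfy $\gcd(|X|,o(T))>1$, so $p\mid |X|$), or the observation that if $p\mid l$ then $(X,\cdotp_l)$ would be an indecomposable square-free cycle set, contradicting Rump's theorem. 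With that repair the proof is complete and matches the paper's.
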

\begin{proof}
   Suppose that $|\mathcal{G}(X)|=l\cdotp t$ for some number $l$ such that $(l,|X|)=(l,t)=1$, and such that $\pi(t)=\pi(|X|)$. Let $(X,\cdotp_{l})$ be the $l$-cabled of $X$. By Proposition \ref{cablind}, $(X,\cdotp_{l})$ is indecomposable and $\mathcal{G}(X,\cdotp_l)=l\mathcal{G}(X)$, hence $|\mathcal{G}(X,\cdotp_l)|=t$ and the first part of the statement follows. The last part follows again by Proposition \ref{cablind} and the fact that if $T$ is a $p$-cycle with $p$ prime, then $T^k$ is again a $p$-cycle for all natural numbers $k$ that are coprime to $p$. 
\end{proof}

\begin{cor}\label{disnil}
    Let $X$ be an indecomposable finite cycle set such that the squaring map is a $p$-cycle, for a prime number $p$. Moreover, suppose that $Dis(X)$ is nilpotent. Then, $|X|=p^k$ for some natural number $k$.
\end{cor}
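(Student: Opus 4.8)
The plan is to prove that the Hall $p'$-part of $Dis(X)$ is trivial, so that $Dis(X)$ is a $p$-group acting transitively on $X$, whence $|X|$ divides a power of $p$. The two main levers are the \emph{simplicity} of $X$ (Proposition \ref{prop:1}) and the theorem of C. Mora and Sastriques (Theorem A of \cite{CS23}): for an indecomposable cycle set the size of $X$ and the order $o(T)=p$ of its squaring map cannot be coprime.

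First I would record that $Dis(X)$ is a normal subgroup of $\mathcal{G}(X)$ and, viewing $\mathcal{G}(X)$ as a left brace, an ideal of it; hence the orbit construction of \cite{Okcycle} produces the quotient cycle set $X/Dis(X)$ together with the canonical epimorphism. Since $X$ is simple, $X/Dis(X)$ is either isomorphic to $X$ or a single point. In the first case the $Dis(X)$-orbits are singletons, and as $\mathcal{G}(X)\le Sym(X)$ acts faithfully we get $Dis(X)=\{1\}$; then all $\sigma_x$ coincide, so $X$ is a trivial cycle set, $T=\sigma_x$ generates the cyclic group $\mathcal{G}(X)=\langle T\rangle$, and transitivity of a $p$-cycle forces $|X|=p$. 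It therefore remains to treat the case in which $Dis(X)$ is transitive on $X$, where orbit–stabilizer gives that $|X|$ divides $|Dis(X)|$.

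Now $Dis(X)$ is a nilpotent ideal, hence itself a left brace whose multiplicative group is the internal direct product of its Sylow subgroups, which have pairwise coprime orders. Applying Lemma \ref{brace} to the brace $Dis(X)$, I obtain a brace decomposition $Dis(X)\cong P\times H$, where $P$ is the Sylow $p$-subgroup and $H$ the Hall $p'$-subgroup, both ideals of $Dis(X)$. The crux is to promote $H$ to an ideal of $\mathcal{G}(X)$: normality holds because conjugation by $\mathcal{G}(X)$ permutes the Sylow subgroups of the normal subgroup $Dis(X)$ and fixes $H$; for $\lambda$-invariance I would use that the additive group $(Dis(X),+)$ is abelian and, by the same decomposition, $H$ is precisely its additive Hall $p'$-subgroup, hence characteristic and stable under every $\lambda_g|_{Dis(X)}\in\Aut(Dis(X),+)$ (note $\lambda_g(Dis(X))=Dis(X)$ since $Dis(X)$ is an ideal). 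Granting that $H$ is an ideal of $\mathcal{G}(X)$, simplicity again forces $X/H\cong X$ or $|X/H|=1$. If $X/H$ is a point then $H$ is transitive and nontrivial, so $|X|$ divides $|H|$, a $p'$-number; then $(|X|,o(T))=1$ and Theorem A of \cite{CS23} makes $X$ decomposable, a contradiction. Hence $H=\{1\}$, so $Dis(X)=P$ is a $p$-group acting transitively, and $|X|$ divides $|P|$, giving $|X|=p^k$.

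The step I expect to be the main obstacle is exactly this promotion of $H$ from an ideal of $Dis(X)$ to an ideal of the ambient brace $\mathcal{G}(X)$, that is, verifying $\lambda_g(H)\subseteq H$ for all $g\in\mathcal{G}(X)$; the argument above, identifying $H$ with the characteristic additive Hall $p'$-subgroup of the abelian group $(Dis(X),+)$, is the delicate point, together with the background fact that $Dis(X)$ is an ideal of $\mathcal{G}(X)$ in the first place, which I would invoke as standard in the brace/cycle-set dictionary.
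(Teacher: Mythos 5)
Your argument is correct, but it takes a genuinely different route from the paper. The paper's proof is essentially two lines: Proposition \ref{prop:1} gives simplicity, \cite[Theorem A]{CS23} gives $p\mid |X|$, and then \cite[Corollary 41]{BOC25} is cited as a black box for the fact that a simple cycle set of non-prime size with nilpotent displacement group has prime-power size. You instead re-derive the needed special case of that citation from scratch: after disposing of the case $Dis(X)=\{1\}$ (where $X$ is a trivial cycle set and $|X|=p$), you apply Lemma \ref{brace} to the nilpotent ideal $Dis(X)$ to split off its Hall $p'$-part $H$, promote $H$ to an ideal of the ambient brace $\mathcal{G}(X)$ (normality since $H$ is characteristic in the normal subgroup $Dis(X)$; $\lambda$-invariance since, by Lemma \ref{brace}, $H$ coincides with the additive Hall $p'$-subgroup of $(Dis(X),+)$, which every $\lambda_g$ preserves), and then use simplicity of $X$ together with \cite[Theorem A]{CS23} to force $H=\{1\}$, so that the transitive group $Dis(X)$ is a $p$-group and $|X|$ divides $|Dis(X)|$. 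The step you flag as delicate goes through, granted the standard fact that $Dis(X)$ is an ideal of the left brace $\mathcal{G}(X)$; this is indeed part of the usual brace/cycle-set dictionary and is consistent with how the paper uses $\mathcal{G}(X)$ elsewhere, though it would deserve an explicit reference. What your approach buys is self-containedness (no dependence on \cite{BOC25}, only on Lemma \ref{brace}, Proposition \ref{prop:1}, the quotient construction of \cite{Okcycle}, and \cite{CS23}); what it costs is length, since it quietly reproves the relevant case of \cite[Corollary 41]{BOC25} rather than citing it.
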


\begin{proof}
    By Proposition \ref{prop:1} $X$ is simple and by \cite[Theorem A]{CS23} $X$ is a multiple of $p$. If $|X|=p$, the statement clearly follows. If $|X|\neq p$, by \cite[Corollary 41]{BOC25} we have that $X$ must have prime-power size, and the statement again follows. 
\end{proof}

\noindent The following lemma, which is of crucial importance for our purposes, gives a nice combinatorial property of a (not necessarily indecomposable) cycle set.

\begin{lemma}[Lemma 2.12, \cite{BKSV}]\label{mapbij}
    Let $X$ be a non-degenerate cycle set. Then, the map $M:X\times X\longrightarrow X\times X$, $(x,y)\mapsto (x\cdotp y,y\cdotp x)$ is bijective.
\end{lemma}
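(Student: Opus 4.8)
The plan is to produce an explicit inverse of $M$ directly from the cycloid equation. Writing \eqref{eqcycleset} with $z=y$ gives $(x\cdot y)\cdot(x\cdot y)=(y\cdot x)\cdot(y\cdot y)$, that is
\[
T(x\cdot y)=\sigma_{y\cdot x}\bigl(T(y)\bigr),
\]
and writing it with $z=x$ gives $(x\cdot y)\cdot(x\cdot x)=(y\cdot x)\cdot(y\cdot x)$, that is
\[
T(y\cdot x)=\sigma_{x\cdot y}\bigl(T(x)\bigr).
\]
These two identities hold for all $x,y\in X$ and are the backbone of the argument.

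Since $X$ is non-degenerate, $T$ and every left multiplication $\sigma_w$ are bijective, so the two identities can be solved for $y$ and $x$ in terms of $u:=x\cdot y$ and $v:=y\cdot x$: from the first, $y=T^{-1}\sigma_{v}^{-1}T(u)$, and from the second, $x=T^{-1}\sigma_{u}^{-1}T(v)$. This suggests defining $N:X\times X\to X\times X$ by
\[
N(u,v):=\bigl(T^{-1}\sigma_{u}^{-1}T(v),\;T^{-1}\sigma_{v}^{-1}T(u)\bigr).
\]
By construction, the two displayed identities are exactly the statement that $N\bigl(M(x,y)\bigr)=(x,y)$ for all $x,y\in X$; hence $N\circ M=\mathrm{id}_{X\times X}$, and in particular $M$ is injective (while $N$ is surjective).

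It remains to prove that $M$ is surjective, which is where the real content lies. When $X$ is finite this is immediate, since an injective self-map of a finite set is automatically bijective; as all cycle sets relevant to this paper are finite, this already suffices and yields $N=M^{-1}$. For an arbitrary non-degenerate cycle set one must instead verify that $N$ is a genuine two-sided inverse, i.e. that $x:=T^{-1}\sigma_{u}^{-1}T(v)$ and $y:=T^{-1}\sigma_{v}^{-1}T(u)$ satisfy $x\cdot y=u$ and $y\cdot x=v$. Setting $a:=x\cdot y$ and $b:=y\cdot x$, the two backbone identities show that both $(a,b)$ and $(u,v)$ solve the system $T(\alpha)=\sigma_{\beta}\bigl(T(y)\bigr)$, $T(\beta)=\sigma_{\alpha}\bigl(T(x)\bigr)$, so the task reduces to the uniqueness of its solution; equivalently, one shows $N$ is injective and then deduces $M\circ N=\mathrm{id}_{X\times X}$ from the relation $N\bigl(M(N(u,v))\bigr)=N(u,v)$.

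\textbf{This uniqueness step is the main obstacle.} It is the only place where the cycloid equation must be exploited beyond the two specializations $z=x,y$ above, and it is what prevents the inverse from being read off by a purely formal manipulation. In the finite setting it is bypassed entirely by a cardinality argument, so the cleanest route for our purposes is to record the explicit left inverse $N$ coming from the cycloid identities and to invoke finiteness; the general statement then follows once the uniqueness of solutions to the system above is established.
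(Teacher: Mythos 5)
The paper never proves this lemma itself --- it is imported verbatim as Lemma 2.12 of \cite{BKSV} --- so there is no internal argument to compare against; your proposal has to stand on its own. The positive part is sound: the two specializations $z=y$ and $z=x$ of the cycloid equation are correct, non-degeneracy makes $T^{-1}$ and each $\sigma_w^{-1}$ available, and the computation $N\bigl(M(x,y)\bigr)=(x,y)$ for $N(u,v)=\bigl(T^{-1}\sigma_u^{-1}T(v),\,T^{-1}\sigma_v^{-1}T(u)\bigr)$ is exactly the pair of identities you derived. Hence $M$ is injective, and for finite $X$ therefore bijective. Since every place this paper invokes Lemma \ref{mapbij} (the proof of Theorem \ref{fixedp} and the subsequent bound on $|X|$ in terms of $|Fix(T)|$) concerns finite cycle sets and in fact only uses injectivity of maps of the form $x\mapsto(x\cdot z,\,z\cdot x)$, your argument fully supports everything the paper builds on the lemma.

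As a proof of the statement as literally written --- for an arbitrary, possibly infinite, non-degenerate cycle set --- there is a genuine gap, which you correctly diagnose but do not close: you never show that $N$ is a right inverse, i.e. that $x:=T^{-1}\sigma_u^{-1}T(v)$ and $y:=T^{-1}\sigma_v^{-1}T(u)$ satisfy $x\cdot y=u$ and $y\cdot x=v$. Your reduction to the uniqueness of solutions of the system $T(\alpha)=\sigma_\beta\bigl(T(y)\bigr)$, $T(\beta)=\sigma_\alpha\bigl(T(x)\bigr)$ is accurate, but that uniqueness does not follow from the two specializations you used, and I see no way to extract it by further formal manipulation of them alone; surjectivity in the infinite case needs an additional input (for instance, exhibiting $M$ as a composition of bijections built from the associated involutive solution $r$, which is how such statements are usually handled in the literature). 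So you should either add the hypothesis that $X$ is finite --- harmless for this paper, and I would recommend saying explicitly that this is the case being proved --- or supply the missing surjectivity argument; as it stands the general claim is asserted but not established.
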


The rest of this section is devoted to the main structural results. From now on, we denote by $\Fix(T)$ the set of the fixed point under the squaring map.

\begin{lemma}\label{sysblocks}
    Let $X$ be an indecomposable finite cycle set with nilpotent permutation group, and let $p$ be the minimal prime that divides $|\mathcal{G}(X)|$. Then $X$ has a system of blocks $I:=\{\Delta_x\}_{x\in X}$ of size $p$ and there exists $z\in X$ such that the image of $\sigma_z$ in $Sym(I)$ is a $p$-cycle and if $x\in \Fix (T)$ then $\sigma_x$ in $Sym(I)$ is trivial.
\end{lemma}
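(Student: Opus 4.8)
The goal is to produce a system of blocks $I=\{\Delta_x\}_{x\in X}$ of size $p$ on which the $\sigma$-action behaves predictably. Since $\mathcal{G}(X)$ is nilpotent and $p$ is its minimal prime divisor, the plan is to exploit the structure theory developed in Section 2, together with the left-brace description of indecomposable cycle sets from Proposition~\ref{cosmod}. My first step would be to locate a suitable normal subgroup (equivalently, an ideal of the left brace $B:=\mathcal{G}(X)$) whose orbits on $X$ form the desired blocks. Because $B$ is nilpotent, by Theorem~\ref{nilp} it splits as a direct product of its Sylow subgroups $B_p\times B_{p'}$, where $B_p$ is the Sylow $p$-subgroup; this decomposition induces a product decomposition of $X$ into cycle sets, and I expect the block system to come from the $B_{p'}$-orbits (the ``complementary'' part), so that each block $\Delta_x$ is a $p$-local piece. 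I would use Proposition~\ref{cosmod} to realize $X$ as $B/K$ with $K$ core-free inside a stabilizer, and then track how the ideal acts on cosets.

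\textbf{Producing the block system and the $p$-cycle.} For the block system, I would let $I$ correspond to the quotient $X/J$ for an appropriately chosen ideal $J$ of $B$, using the canonical epimorphism $X\to X/J$ described before Proposition~\ref{cok}; by Lemma~\ref{dyn} the fibers of this epimorphism (which are exactly the blocks) all have the same cardinality, and the job is to arrange for this common size to be $p$. Concretely I would aim for $J$ to be an ideal with $|X|/|X/J|=p$, which given the nilpotency and the minimal-prime hypothesis should be achievable by descending through the Sylow $p$-subgroup one step at a time (passing to an index-$p$ subgroup of a central quotient, which is normal by nilpotency). To locate the element $z$ whose image in $Sym(I)$ is a $p$-cycle: since $\mathcal{G}(X)$ acts transitively on $X$ (indecomposability) and hence transitively on the blocks $I$ of size $p$, some generator $\sigma_z$ must act nontrivially on $I$; because $|I|=p$ and the induced group is a transitive $p$-group acting on $p$ points, it is cyclic of order $p$, and any element acting nontrivially is a full $p$-cycle. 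I would make this precise by noting the image of $\mathcal{G}(X)$ in $Sym(I)$ is a transitive nilpotent—hence cyclic of order $p$—subgroup, so every nontrivial $\sigma_x$ projects to a $p$-cycle and every trivial one projects to the identity.

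\textbf{The fixed-point condition.} The last requirement is that for $x\in Fix(T)$, the image of $\sigma_x$ in $Sym(I)$ is trivial. Here I would use Proposition~\ref{pitype}: since $\mathcal{G}(X)$ is nilpotent, $\pi(o(\sigma_x))=\pi(|X|)=\pi(|\mathcal{G}(X)|)$ for every $x$, so this cannot by itself force triviality; instead the fixed-point hypothesis $T(x)=\sigma_x^{-1}(x)=x$, i.e. $\sigma_x(x)=x$, must be fed into the combinatorics of the block. The natural tool is the cycloid equation~\eqref{eqcycleset} together with the definition of $T$: if $x$ is fixed by $T$, then $\sigma_x$ fixes $x$, and I would argue that this forces $\sigma_x$ to fix the block $\Delta_x$ containing $x$ pointwise-at-the-block-level, hence act trivially on $I$ since the induced action is by a $p$-cycle that fixes a point and a $p$-cycle fixing any point is the identity.

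\textbf{Main obstacle.} The hard part will be establishing that the block system can be chosen of size \emph{exactly} $p$ rather than merely of $p$-power size, and simultaneously that a single element $z$ projects to a full $p$-cycle while \emph{all} fixed points of $T$ project trivially. The first two items are fairly structural (nilpotency plus minimal prime), but the compatibility with $Fix(T)$ is delicate: I must verify that the chosen ideal $J$ is adapted to the squaring map, i.e. that $T$ descends to the quotient $X/J$ and that its fixed points lift to elements acting trivially on the blocks. I expect to need the explicit formula $\sigma_x(x)=x$ for $x\in Fix(T)$ combined with the cycloid equation to control the action of $\sigma_x$ across the whole block $\Delta_x$, and this interplay between the squaring map and the block structure is where the real work lies.
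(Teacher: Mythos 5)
There is a genuine gap, and it stems from a misreading of what ``system of blocks of size $p$'' means here. In the paper's proof (and in the way the lemma is used in Theorem~\ref{fixedp}), the point is that $|I|=p$: one takes an intransitive normal subgroup $K$ of $G:=\mathcal{G}(X)$ whose orbits form exactly $p$ blocks, each of size $|X|/p$. Concretely, since $X$ is of $\pi$-type the minimal prime $p$ of $|G|$ divides $|X|$, which is the index of a point stabilizer, so the nilpotent group $G$ has a maximal (hence normal, of index $p$) subgroup containing that stabilizer; its orbits give $p$ blocks. Then the image of $G$ in $Sym(I)$ is a transitive nilpotent subgroup of $Sym(I)$ with $|I|=p$, hence cyclic of order $p$, so every $\sigma_y$ acts on $I$ either trivially or as a $p$-cycle; since the $\sigma_y$ generate $G$, some $\sigma_z$ is a $p$-cycle on $I$, and for $x\in Fix(T)$ the relation $\sigma_x(x)=x$ forces $\sigma_x(\Delta_x)=\Delta_x$, which rules out the $p$-cycle alternative. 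No cycloid-equation bookkeeping is needed for this last step.

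Your construction goes the other way: you arrange the fibers of $X\to X/J$ to have size $p$, i.e.\ you produce $|X|/p$ blocks each of cardinality $p$, and then argue as if $|I|=p$ (``because $|I|=p$ and the induced group is a transitive $p$-group acting on $p$ points, it is cyclic of order $p$''). These two readings coincide only when $|X|=p^2$. With your block system the induced action is on $|X|/p$ points, and there is no reason every nontrivial element should act there as a $p$-cycle; the trivial-or-$p$-cycle dichotomy, on which both the existence of $z$ and the $Fix(T)$ claim rest, collapses, and the later application in Theorem~\ref{fixedp} (where $\sigma_{z\cdot x}\sigma_z$ must be a $p$-cycle on $I$ while $\sigma_{x\cdot z}\sigma_x$ is trivial) would fail as well. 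The remaining ingredients of your sketch --- orbits of a normal subgroup as blocks, a nilpotent transitive image that is cyclic of order $p$, and $\sigma_x$ fixing the block of $x$ when $T(x)=x$ --- are exactly the paper's, so the argument is repaired simply by choosing the normal subgroup so that it has $p$ orbits rather than orbits of size $p$, and by noting that $p$ divides $|X|$ (via $\pi$-type) so that such a subgroup exists.
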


\begin{proof}
    Since $X$ is of $\pi$ type, there exist two normal subgroups $H$ and $K$ of $\mathcal{G}(X)$ such that $|H/K|=p$, $H$ acts transitively on $X$ and $K$ acts intransitively on $X$. Then, the orbits of $K$ induce a system of blocks $I$, and since $|H/K|=p$, we have that $|I|=p$. Now, consider the action of $\mathcal{G}(X)$ on $I$. Let $\sigma\in\mathcal{G}(X),$ the minimality condition implies $\sigma(\Delta)=\Delta$ for all $\Delta\in I$, or it acts as a $p$-cycle on the elements of $I$. Therefore, if $x\in \Fix(T)$, then $\sigma_x$ in $Sym(I)$ is trivial. Moreover, the image of $\mathcal{G}(X)$ in $Sym(I)$, which we denote by $\mathcal{G}(X)_I$, is a group that acts transitively on $I$, hence the existence of $z\in X$ such that the image of $\sigma_z$ in $Sym(I)$ is a $p$-cycle follows from the fact that $\{\sigma_x~|~x\in X\}$ generates $\mathcal{G}(X)$.
\end{proof}

\noindent As an elementary consequence of the previous lemma, we obtain the following corollary.

\begin{cor}
    Let $X$ be an indecomposable finite cycle set with nilpotent permutation group. Then, there exists $z\in X$ such that $\sigma_z$ has no fixed points.
\end{cor}

In \cite[Question 6.7]{smock}, Agata and Alicja Smoktunowicz asked whether the following statement holds: if $X$ is a cycle set such that for every $x\in X$ there exists $y\in X$ satisfying $ x\cdotp y=y \quad\text{and}\quad y\cdotp x=x, $
then $X$ is decomposable. They proved that this holds whenever $X$ has finite multipermutation level. As an easy consequence of the previous corollary, we obtain that this holds if $X$ has nilpotent permutation group, and moreover we have that the first of these conditions is sufficient.

Now, we can give the desired classification results.

\begin{thm}\label{classc2}
    Let $X$ be an indecomposable cycle set with nilpotent permutation group and with $T=(1\quad2)$. Then, $|X|\in \{2,4\}$. Moreover, for each size, the cycle set is unique.
\end{thm}
\begin{proof}
    By Proposition \ref{pitype}, we have that $|X|=2^s$ for some $s\in \mathbb{N}$ and $\mathcal{G}(X)$ is a $2$-group. Now, suppose that $|X|>4$. By Lemma \ref{sysblocks}, we have a system of blocks of size $2$ and since $T=(12)$, necessarily it follows that $ \Delta_1\neq \Delta_2$. Since $\sigma_{x\cdotp y}\sigma_x=\sigma_{y\cdotp x}\sigma_y$ for all $x,y\in X$, if $x,y\in \Fix T$ we must have $M(x,y)=(y\cdotp x,x\cdotp y)\in (\Fix T\times \Fix T)\cup ((X-\Fix T)\times (X-\Fix T)) $ (otherwise, $\sigma_{x\cdotp y}\sigma_x$ and $\sigma_{y\cdotp x}\sigma_y$ acts differently on the system of blocks). In the same way, if $x,y\in X-\Fix T$, we must have $M(x,y)=(y\cdotp x,x\cdotp y)\in (\Fix T\times \Fix T)\cup ((X-\Fix T)\times (X-\Fix T)) $. If $M(x,y)\in (X-\Fix T)\times (X-\Fix T)$ for all $x,y\in (X-\Fix T)\times (X-\Fix T)$, since $M$ is bijective, we must have $M(x,y)\in \Fix T\times \Fix T$ for all $x,y\in \Fix T$. This implies that $\Fix T$ and $X-\Fix T$ are sub-cycle set of $X$ and hence $X$ is decomposable, a contradiction. \\
    Therefore, there exist $a,b\in \Fix T$ such that $M(1,2)=(2\cdotp 1,1\cdotp 2)=(a,b)$. Moreover, $a\in \Delta_2$ and $b\in \Delta_1$.\\ 
    Now, 
    \begin{align*}
        a\cdot 1 &=(2\cdot 1)\cdot (2\cdot 2)=(1\cdot 2)\cdot (1\cdot 2)=b\cdot b=b,
    \end{align*}
    \begin{align*}
        b\cdot 2 &=(1\cdot 2)\cdot (1\cdot 1)=(2\cdot 1)\cdot (2\cdot 1)=a\cdot a=a,
    \end{align*}
    and
   we have shown that
    \begin{equation}\label{eq:M}
        M\Big((\Fix T\times \Fix T)\cup\big( (X-\Fix T)\times (X-\Fix T) \big)\Big)=(\Fix T\times \Fix T)\cup\big( (X-\Fix T)\times (X-\Fix T) \big),\\
    \end{equation}
therefore:
    $$M(1,a)=(b,1\cdot a)\in (\Fix T\times (X-\Fix T))\mbox{ and } M(2,b)=(a,2\cdot b)\in (\Fix T\times (X-\Fix T)).$$
Also we have $a\in \Delta_2$ and $b\in \Delta_1$, thus $1\cdot a=1$ and $2\cdot b=2.$\\
    Now, $a\in \Fix T\cap \Delta_2$ and $b\in \Fix T\cap \Delta_1$, therefore $2\cdot a,~ 1\cdot b\notin \{1,2\}$. Hence by \eqref{eq:M}, $M(2,a)=(a\cdot 2, 2\cdot a),~M(1,b)=(b\cdot 1, 1\cdot b)\in \{1,2\}\times \Fix T$ and moreover $\sigma_a$ and $\sigma_b$ fixes $\Delta_1$ and $\Delta_2$, so it follows that $a\cdot 2=2$ and $b\cdot 1=1$.
Hence we have the following parzial multiplication table
\[
\begin{array}{c|cccc}
 & 1 & 2 & a & b \\
\hline
1 & 2 & b & 1 & - \\
2 & a & 1 & - & 2 \\
a & b & 2 & a & - \\
b & 1 & a & - & b \\
\end{array}
\]
Note that, actually, we do not know if $\{1,2,a,b\}$ is closed under $\cdotp$. Now, since $2\cdotp b=2$ and $1\cdotp a=1$ we have
$$1=(2\cdotp 2)=(2\cdotp b)\cdotp (2\cdotp b)=(b\cdotp 2)\cdotp (b\cdotp b)=a\cdotp b$$
$$2=(1\cdotp 1)=(1\cdotp a)\cdotp (1\cdotp a)=(a\cdotp 1)\cdotp (a\cdotp a)=b\cdotp a.$$
And finally using the previous equality we obtain
$$(2\cdotp a)=(b\cdotp a)\cdotp (b\cdotp 2)=(a\cdotp b)\cdotp (a\cdotp 2)=1\cdotp 2=b $$
$$(1\cdotp b)=(a\cdotp b)\cdotp (a\cdotp 1)=(b\cdotp a)\cdotp (b\cdotp 1)=2\cdotp 1=a. $$
Therefore the multiplication table is 
\[
\begin{array}{c|cccc}
 & 1 & 2 & a & b \\
\hline
1 & 2 & b & 1 & a \\
2 & a & 1 & b & 2 \\
a & b & 2 & a & 1 \\
b & 1 & a & 2 & b \\
\end{array}
\]
and hence $\{1,2,a,b\}$ is a sub-cycle set of $X$.\\
\textcolor{black}Let $x\in X-\{1,2,a,b\}$, clearly $x\in\Fix T$, by \eqref{eq:M}, $M(x,1)\in (\Fix T\times (X-\Fix T))\cup ((X-\Fix T)\times \Fix T)$. If $(1\cdotp x,x\cdotp 1)\in (X-\Fix T)\times \Fix T$, we must have $1\cdotp x\in \{1,2\}$, hence $x\in \{1,a\}$, a contradiction. Therefore we must have $(1\cdotp x,x\cdotp 1)\in \Fix T\times (X-\Fix T)$, and hence $x\cdotp 1\in \{1,2\}$, and since $\sigma_x$ fixes the blocks, we have $x\cdotp 1=1$. In the same way, one has $x\cdotp 2=2$. Finally, we have
$$b=1\cdotp 2=(x\cdotp 1)\cdotp (x\cdotp 2)=(1\cdotp x)\cdotp (1\cdotp 2)=(1\cdotp x)\cdotp b $$
for all $x\in X-\{1,2,a,b\}$. Since $\{1,2,a,b\}$ is a sub-cycle set, $\sigma_1(X-\{1,2,a,b\})=X-\{1,2,a,b\}$. So we obtain $b=y\cdot b$ for all $y\in X-\{1,2,a,b\}.$
In a similar way, we have $x\cdotp a=a $ for all $x\in \textcolor{black}{X}-\{1,2,a,b\}$. Hence, we have that $\sigma_x(\{1,2,a,b \})\subseteq \{1,2,a,b \}$ for all $x\in X$, hence $X$ is decomposable, a contradiction. Therefore, $|X|=4$. The last part of the statement follows by computer calculation.
\end{proof}

It is natural to ask whether a result analogous to the previous theorem remains valid when the squaring map $T$ is a $p$-cycle, where $p>2$ is a prime. The answer is affirmative under some assumptions, as shown in the following lemma, which will subsequently be used to prove, in a more general setting, the case $p=3$.

\begin{lemma}\label{nonexis}
Let $X$ be a finite indecomposable nilpotent cycle set and
\begin{itemize}
\item $T=(1,2,\ldots, p)$ for some prime $p\ge 3$;
\item $\mathcal{G}(X)$ acts on a partition $I=\{\Delta_{1},\ldots, \Delta_{p}\}$ of size $p$ and with $i\in \Delta_{i}$ for all $i\in \{1,...,p\}$;
\item $\sigma_{{x}_{|_I}}=\sigma_{{y}_{|_I}}=\alpha$, where $\alpha$ is a $p$-cycle of $Sym(I)$, for all $x,y\in \{1,...,p\}$.
\end{itemize}
Then $|X|=p$, moreover there is a unique such cycle set.
\end{lemma}
\begin{proof}
By Proposition \ref{pitype}, $\mathcal{G}(X)$ is a $p$-group and $X$ has prime power size, thus $g_I$ is a $p$-cycle or the identity for all $g\in \mathcal{G}(X)$. Now let $x\in \Fix T$,  then the fact $\sigma_x(x)=x$, forces $\sigma_{{x}_{|_I}}$ to be identity.\\
Let $x,y\in \Fix T$, and $M(x,y)=(y\cdot x, x\cdot y)\in \Fix T\times (X-\Fix T)$, then using the cycloid relation
$$\sigma_{x\cdot y}\sigma_x=\sigma_{y\cdot x}\sigma_y,$$
we obtain that
\begin{align*}
    \sigma_{x\cdot y}\sigma_x(\Delta_1) &=\sigma_{x\cdot y}(\Delta_1)\neq \Delta_1\\
    \sigma_{y\cdot x}\sigma_y(\Delta_1) &=\sigma_{y\cdot x}(\Delta_1)=\Delta_1.
\end{align*}
Which is not possible. Similarly, we can show $M(x,y)\notin (X-\Fix T)\times \Fix T$. Hence we get
$$M\big(\Fix T\times \Fix T\big)\subseteq \big((\Fix T\times \Fix T)\cup ((X-\Fix T)\times(X- \Fix T))\big).$$ In the same way, we can show
$$M\big((X-\Fix T)\times (X-\Fix T)\big)\subseteq \big((\Fix T\times \Fix T)\cup ((X-\Fix T)\times(X- \Fix T))\big).$$
Then, we obtain
$$M\big((\Fix T\times \Fix T)\cup (X-\Fix T)\times (X-\Fix T)\big)=\big((\Fix T\times \Fix T)\cup ((X-\Fix T)\times(X- \Fix T))\big).$$
If $M(x,y)\in (X-\Fix T)\times (X-\Fix T)$ for all $x,y\in X-\Fix T\times X-\Fix T$, since $M$ is bijective, we must have $M(x,y)\in \Fix T\times \Fix T$ for all $x,y\in \Fix T$. This implies that $\Fix T$ and $X-\Fix T$ are sub-cycle set of $X$ and hence $X$ is decomposable, a contradiction. Therefore, there exist $i,j\in X-\Fix T $ and $a,b\in \Fix T\times \Fix T$ such that $M(i,j)=(j\cdotp i,i\cdotp j)=(a,b)$. \\
Now
\[
a=a\cdot a=(j\cdot i)\cdot (j\cdot i)=(i\cdot j)\cdot (i\cdot i)
=b\cdot (i\cdotp i)\in \alpha(\Delta_{i}),
\]
and using this equality again,
\[
a=a\cdot a=(b\cdot (i\cdotp i))\cdot (b\cdot (i\cdotp i))
=((i\cdotp i)\cdot b)\cdot ((i\cdotp i)\cdot (i\cdotp i))
\in \alpha^2(\Delta_{i})\cup \alpha^3(\Delta_{i}),
\]
We obtain
\[
a\in \alpha(\Delta_{i})\cap
(\alpha^2(\Delta_{i})\cup\alpha^3(\Delta_{i})),
\]
but since $p\ge 3$ we have $\alpha(\Delta_{i})\cap
(\alpha^2(\Delta_{i})\cup\alpha^3(\Delta_{i}))=\emptyset $, contradicting the existence of such an element $a$. Thus $\Fix T=\emptyset$, that is, $|X|=p$. The uniqueness of $X$ follows by \cite[Theorem 2.13]{ESS99}.
\end{proof}

\begin{thm}\label{classc3}
    Let $X$ be an indecomposable cycle set with nilpotent permutation group and with $T=(1\quad2\quad 3)$. Then, $|X|=3$. Moreover, there is a unique such a cycle set.
\end{thm}
\begin{proof}
    By Proposition \ref{pitype}, $|X|=3^s$ for some $s\in \mathbb{N}$ and $\mathcal{G}(X)$ is a $3$-group. Suppose $|X|>3$. Then, by Lemma \ref{sysblocks} we have a system of blocks of size $3$. If we indicate by $\Delta_i$ the block of the element $i\in X$, we have to distinguish some cases.
    \begin{center}
        Case 1: $\Delta_1=\Delta_2=\Delta_3$
    \end{center}
    This case is not possible since otherwise $\Delta_1$ is fixed by $\mathcal{G}(X)$ and this implies that $X$ is decomposable.
     \begin{center}
        Case 2: $\Delta_1=\Delta_2$ and $\Delta_2\neq \Delta_3$
    \end{center}   
Since $T=(1\quad 2\quad 3)$, we have that $\sigma_1$ acts as the identity on the blocks, $\sigma_2$ acts on the blocks as the $3$-cycle $(\Delta_1\quad \Delta_3 \quad \Delta)$ (where $\Delta$ it the third block) and $\sigma_3$ acts on the blocks as the $3$-cycle $(\Delta_3\quad \Delta_1 \quad \Delta)$.\\
Now, we have $\sigma_{1\cdotp 2}\sigma_1=\sigma_{2\cdotp 1}\sigma_2$. Since these two permutations must act in the same way on the blocks, and moreover $1\cdotp 2\in \Delta_1$ with $1\cdotp 2\neq 2$, we have necessarily that $\sigma_{1\cdotp 2}$ acts as the identity on the blocks and  $\sigma_{2\cdotp 1}$ acts as $(\Delta_3 \quad \Delta_1\quad \Delta)$. This implies that $2\cdotp 1=3$, a contradiction.
 \begin{center}
        Case 3: $\Delta_1\neq \Delta_2$ and $\Delta_2= \Delta_3$\\
    \end{center}  
It is similar to the Case 2.
\begin{center}
    Case 4: $\Delta_1= \Delta_3$ and $\Delta_2\neq \Delta_3$
\end{center}
It is similar to the Case 2.
\begin{center}
        Case 5: $\Delta_1\neq \Delta_2$, $\Delta_2\neq \Delta_3$ and $\Delta_3\neq \Delta_1$
    \end{center} 
In this case, $\sigma_1$, $\sigma_2$ and $\sigma_3$ act as the $3$-cycle $(\Delta_1\quad \Delta_2\quad \Delta_3)$ on the blocks. Therefore we have $|X|=3$ by Lemma \ref{nonexis} and this is the unique possible case. Unicity follows by \cite[Theorem $2.13$]{ESS99}.
\end{proof}

Under the assumption that $X$ has a $p$-power cardinality, we can obtain the results of the previous theorems, when $p\in \{2,3\}$, without assuming  the nilpotency of $\mathcal{G}(X)$, as we see in the following corollaries.

\begin{cor}\label{classif}
     Let $(X,\cdotp)$ be an indecomposable cycle set of order $2^k$ and with squaring map $T:=(1\quad 2)$. Then $|X|\in \{2,4\}$. Moreover, for each size, the cycle sets are unique.
\end{cor}

\begin{proof}
     By Proposition \ref{cablpi}, there exists a cabling $(X,\cdotp_l)$ of $(X,\cdotp)$ that is of $\pi$-type, hence $\mathcal{G}(X)$ is a $2$-group, and again by Proposition \ref{cablpi} the squaring map of $(X,\cdotp_l)$ is a transposition. By Theorem \ref{classc2} applied to $(X,\cdotp_l)$, $|X|\in \{2,4\}$. The second part follows by \cite[Theorem 2.13]{ESS99} and the inspection of the cycle sets of size $4$.
\end{proof}

\begin{cor}\label{classif2}
     Let $(X,\cdotp)$ be an indecomposable cycle set of order $3^k$ and with squaring map $T:=(1\quad 2\quad 3)$. Then $|X|=3$. Moreover, the cycle sets is unique.
\end{cor}

\begin{proof}
     By Proposition \ref{cablpi}, there exists a cabling $(X,\cdotp_l)$ of $(X,\cdotp)$ that is of $\pi$-type, hence $\mathcal{G}(X)$ is a $3$-group, and again by Proposition \ref{cablpi} the squaring map of $(X,\cdotp_l)$ is a $3$-cycle. By Theorem \ref{classc3} applied to $(X,\cdotp_l)$, $|X|=3$. The second part follows by \cite[Theorem 2.13]{ESS99}.
\end{proof}

Theorems \ref{classc2} and \ref{classc3} answer in negative to Questions 3.18 and 3.19 of \cite{VenRami22}, under the assumption that $\mathcal{G}(X)$ is nilpotent. At present, we are unable to determine whether the results established for $p \in \{2,3\}$ remain valid for an arbitrary prime number without additional assumptions on the blocks. We therefore pose the following question, which close the section.

\begin{qns}
Let $X$ be an indecomposable cycle set with nilpotent permutation group, and suppose that the squaring map $T$ is a $p$-cycle for some prime number $p$ with $p>3$. Is it true that $|X|=p$?\end{qns}

\section{The latin case}

In this section, we study latin cycle sets and we show that, unlike in the general case, the situation is more rigid. In particular, we prove that there is only one latin cycle set with a $p$-cycle as squaring map (without restrictions on the permutation group and on the prime number $p$).

\begin{defn}
    Let $(X,\cdotp)$ be a cycle set. Then, $X$ is said to be \emph{latin} if $X$ is also a quasigroup.
\end{defn}

\noindent Of course, latin cycle sets are always indecomposable and irretractable.\\
In \cite{VenRami22}, Vendramin and Ramirez noted that the squaring map $T$ of indecomposable cycle sets of small size has few fixed points. In particular, computer calculations suggest that  if the squaring map of a cycle set $X$ fixes more than $\frac{|X|}{2}$ points, then $X$ is decomposable. In the following lemma we confirm this evidences for the class of latin cycle sets.

\begin{lemma}\label{lemmlat}
        Let $X$ be a latin cycle set with diagonal map $T$. Then, $|Fix(T)|< \frac{|X|}{2} +1$.
\end{lemma}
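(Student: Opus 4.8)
The plan is to exploit the defining properties of a latin cycle set together with the bijectivity of the map $M$ from Lemma \ref{mapbij}. Recall that $X$ being latin means that not only each left multiplication $\sigma_x$ is a bijection, but also each right multiplication $y\mapsto y\cdotp x$ is a bijection. First I would set $F:=\Fix(T)$ and suppose for contradiction that $|F|\ge \frac{|X|}{2}+1$, i.e. $2|F|>|X|$. The goal is to produce, from two fixed points of $T$, some algebraic relation that is incompatible with the cycloid equation or with the quasigroup structure.

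The key observation I would pursue is the following. For $x\in F$ we have $x\cdotp x=x$, so $\sigma_x(x)=x$. Consider the map $M\colon X\times X\to X\times X$, $(x,y)\mapsto (x\cdotp y,\,y\cdotp x)$, which is bijective by Lemma \ref{mapbij}. Since $2|F|>|X|$, a counting/pigeonhole argument on $M$ restricted to $F\times F$ should force the existence of a pair $(x,y)\in F\times F$ with $x\neq y$ whose image $(x\cdotp y,\,y\cdotp x)$ again lies in $F\times F$; more precisely, I would estimate the number of pairs $(x,y)$ with $x,y\in F$ for which at least one of $x\cdotp y$, $y\cdotp x$ escapes $F$, and show that under $2|F|>|X|$ this cannot exhaust all of $F\times F$. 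Here the latin hypothesis is essential: because the columns and rows of the multiplication table are permutations, the images $x\cdotp y$ (for fixed $x$, varying $y$) and the images $y\cdotp x$ (for fixed $x$, varying $y$) each range bijectively over $X$, which gives sharp control on how many products can fall outside $F$ and yields the strict inequality $<\frac{|X|}{2}+1$ rather than the weaker quadratic bound of Corollary \ref{ineqfix}.

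Having located such $x,y,x\cdotp y,y\cdotp x$ all in $F$, I would then invoke the cycloid equation in the form $\sigma_{x\cdotp y}\,\sigma_x=\sigma_{y\cdotp x}\,\sigma_y$ and apply both sides to a suitable element, tracking fixed-point behaviour of the relevant $\sigma$'s. Since $x,y,x\cdotp y,y\cdotp x\in F$, evaluating the cycloid identity at $x$ or $y$ (using $\sigma_x(x)=x$, $\sigma_y(y)=y$, etc.) should collapse to an equation forcing $x=y$, contradicting $x\neq y$. The delicate point, and what I expect to be the main obstacle, is the bookkeeping in the counting step: getting the pigeonhole bound tight enough to yield the strict inequality with the ``$+1$'' requires carefully accounting for the diagonal pairs $(x,x)$ (which $M$ sends to $(x,x)$) and ruling out degenerate coincidences, so that one genuinely extracts an off-diagonal pair whose products stay in $F$. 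Once that combinatorial extraction is secured, deriving the contradiction from the cycloid equation via the latin (cancellative) structure should be routine.
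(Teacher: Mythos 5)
There is a genuine gap in the combinatorial step, even though your algebraic endgame is sound and essentially matches the paper's. Set $F:=Fix(T)$ and suppose $|F|\ge \frac{|X|}{2}+1$. You ask for a pair $(x,y)\in F\times F$ with $x\neq y$ and \emph{both} $x\cdotp y$ and $y\cdotp x$ in $F$, but neither of your two proposed counts produces such a pair under this hypothesis. Using the bijectivity of $M$ from Lemma \ref{mapbij} one only gets $|M(F\times F)\cap (F\times F)|\ge 2|F|^2-|X|^2$, which is positive only when $|F|>|X|/\sqrt{2}$. Using the fact that rows and columns of the multiplication table are permutations, for fixed $x\in F$ the number of $y\in F$ with $x\cdotp y\notin F$ or $y\cdotp x\notin F$ is bounded only by $2(|X|-|F|)$, and $2(|X|-|F|)<|F|$ requires $|F|>\frac{2}{3}|X|$. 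For instance, with $|X|=12$ and $|F|=7$ the hypothesis $|F|\ge\frac{|X|}{2}+1$ holds but both estimates fail, so the pair you want is not guaranteed to exist; this is not a matter of bookkeeping but a genuine shortfall of the pigeonhole.

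The repair is to notice that you never need both products in $F$. Taking the cycloid equation in the form $(x\cdotp z)\cdotp(x\cdotp z)=(z\cdotp x)\cdotp(z\cdotp z)$ with $x,z,x\cdotp z\in F$, the left side equals $x\cdotp z$ and the right side equals $(z\cdotp x)\cdotp z$, so $x\cdotp z=(z\cdotp x)\cdotp z$; right cancellation (latinity) gives $x=z\cdotp x$, and since also $x=x\cdotp x$, a second cancellation gives $z=x$, a contradiction once $z\neq x$. Hence all that is required is a single $z\in F\setminus\{x\}$ with $x\cdotp z\in F$, and this follows from the injectivity of the one map $\sigma_x$: the set $\sigma_x(F\setminus\{x\})$ has at least $|F|-1\ge \frac{|X|}{2}$ elements while $|X\setminus F|\le \frac{|X|}{2}-1$. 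This is exactly the paper's proof; Lemma \ref{mapbij} is not needed at all, and the cycloid identity is evaluated only at the square of $x\cdotp z$ rather than at a generic element.
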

\begin{proof} 
   If $|Fix(T)|\geq \frac{|X|}{2} +1$ then $|X|\geq 2(|X|-|Fix(T)|)+2$ and this implies that if $x\in Fix(T)$, there exists $z\in Fix(T)$ such that $z\neq x$ and $x\cdotp z\in Fix(T)$. Then $(x\cdotp z)\cdotp (x\cdotp z)=x\cdotp z$, and $(x\cdotp z)\cdotp (x\cdotp z)=(z\cdotp x)\cdotp (z\cdotp z)=(z\cdotp x)\cdotp z$, therefore $x\cdotp z=(z\cdotp x)\cdotp z$. By the latinity of $X$ we have $x=z\cdotp x$ and since $x\cdotp x=x$, again by the latinity of $X$ we obtain $x=z$, a contradiction.
    \end{proof}
    
\noindent Surprisingly, there is only one latin cycle set whose $T$ is a $p$-cycle. We show this in the following theorem. 

\begin{thm}
    Let $X$ be a latin cycle set and suppose that $T=(1\ldots p)$, where $p$ is a prime number. Then, $p=2$ and $|X|=4$. Moreover, $X$ is unique up to isomorphisms.
\end{thm}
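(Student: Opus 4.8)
The plan is to combine the latin inequality from Lemma \ref{lemmlat} with the combinatorial machinery already developed for $p$-cycle squaring maps. Since $T = (1 \ldots p)$ is a $p$-cycle, we have $|Fix(T)| = |X| - p$. Feeding this into Lemma \ref{lemmlat} gives $|X| - p < \frac{|X|}{2} + 1$, which rearranges to $\frac{|X|}{2} < p + 1$, i.e. $|X| < 2p + 2$, so $|X| \leq 2p + 1$. Thus the latin hypothesis alone forces the size of $X$ to be tightly controlled by $p$: the cycle set has at most $2p+1$ elements, of which exactly $p$ are moved by $T$.

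Next I would invoke the structural results on simplicity. By Proposition \ref{prop:1}, $X$ is simple, and by \cite[Theorem A]{CS23} the prime $p$ divides $|X|$ (since $o(T) = p$ is not coprime to $|X|$). Combined with the bound $|X| \leq 2p+1$, the only multiples of $p$ in the admissible range are $|X| = p$ and $|X| = 2p$. The case $|X| = p$ must be excluded: a latin cycle set of prime order $p$ whose squaring map is a full $p$-cycle would be indecomposable of prime size, and here I would argue that such a cycle set retracts (or appeal to the fact that a latin solution of odd prime order with $T$ a full cycle cannot exist, since $T(x) = x \cdot x = \sigma_x(x)$ ranges over all of $X$ by latinity, forcing strong constraints). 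The remaining case $|X| = 2p$ is then the crux.

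For the case $|X| = 2p$, the idea is to bring in Proposition \ref{cok}: since $|X| = 2p$ is not a prime number, simplicity forces $X$ to be irretractable (already known for latin cycle sets), but more usefully one can exploit that $|X|/p = 2$, so $p$ appears to the first power. I would then use Corollary \ref{disnil}-type reasoning or the block structure: the minimal prime dividing $|X|$ is $2$ (if $p$ is odd) or $2$ itself (if $p = 2$). When $p$ is odd, $|X| = 2p$ is not a prime power, and I expect the displacement group or permutation group analysis — together with the rigidity that latinity imposes (every $\sigma_x$ and every right multiplication is a bijection) — to collapse this case. The cleanest route is to show that $|X| = 2p$ with a latin structure and a $p$-cycle $T$ forces $p = 2$, reducing to $|X| = 4$; this can be checked by combining the system of blocks from Lemma \ref{sysblocks} (of size equal to the minimal prime) with the cycloid constraint $\sigma_{z \cdot x}\sigma_z = \sigma_{x \cdot z}\sigma_x$ applied to fixed points, exactly as in the proof of Theorem \ref{fixedp}.

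The main obstacle I anticipate is ruling out $|X| = 2p$ for large odd primes $p$ cleanly, since the elementary inequality only eliminates everything above $2p+1$ but leaves $2p$ in play. I expect the decisive step to be a latin-specific sharpening: in a latin cycle set the map $z \mapsto z \cdot x$ is bijective for fixed $x$, so one can track how the $p$ moved points and the $p$ fixed points interact under multiplication, and derive a contradiction from the fact that a $p$-cycle acting on a system of two blocks of size $p$ cannot be compatible with the quasigroup identities unless $p = 2$. Once $|X| = 2p$ is eliminated except for $p = 2$ (giving $|X| = 4$), the uniqueness of the size-$4$ latin cycle set follows from \cite[Theorem 2.13]{ESS99} together with a direct inspection, as in the proofs of Theorems \ref{classc} and \ref{classif}.
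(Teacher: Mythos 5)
Your opening is exactly right and matches the paper: from Lemma \ref{lemmlat} and $|X|-|Fix(T)|=p$ you get $|X|<2p+2$, and \cite[Theorem A]{CS23} gives $p\mid |X|$, so $|X|\in\{p,2p\}$ when $p$ is odd. But the proof breaks down precisely where you say you ``anticipate the main obstacle'': neither $|X|=p$ nor $|X|=2p$ is actually eliminated. For $|X|=p$ you offer two alternatives in parentheses, neither of which is an argument (``such a cycle set retracts'' contradicts the fact that latin cycle sets are irretractable, and the remark that $T$ ranges over all of $X$ does not by itself force anything). For $|X|=2p$ you propose to use the block system of Lemma \ref{sysblocks} and the cycloid computation from Theorem \ref{fixedp}, but Lemma \ref{sysblocks} is stated only for nilpotent permutation groups, which you have not established here; you explicitly say you ``expect'' the argument to collapse this case rather than showing that it does. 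As written, the two critical cases rest on unverified hopes.

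The paper closes both cases in a single stroke that your proposal misses: since $p$ is odd, both $p$ and $2p$ are square-free, and a latin cycle set is automatically indecomposable \emph{and irretractable}; this contradicts the main theorem of \cite{CO23}, which rules out such solutions of square-free (non-trivial) cardinality. That citation is the entire content of the odd-$p$ elimination, and without it (or a genuine substitute, e.g.\ a worked-out block/cycloid argument that does not presuppose nilpotency) your proof is incomplete. The endgame for $p=2$ --- namely $|X|\in\{2,4\}$, inspection ruling out $|X|=2$, and uniqueness of the latin cycle set of size $4$ --- is fine and agrees with the paper.
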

\begin{proof}
    By \cite[Theorem A]{CS23}, $X$ is a multiple of $p$, hence there exist a natural number $k$ such that $|X|=kp$. By the previous proposition, we have the inequality $|X|<2(|X|-|Fix(T)|)+2$ and hence $kp<2p+2$. If $p>2$, we must have $k\in \{1,2\}$, hence $X$, being latin, would be an indecomposable and irretractable cycle set with $|X|$ a square-free number, and this contradicts the main theorem of \cite{CO23}. Therefore, we must have $p=2$ and $|X|\in \{2,4\}$. By inspection of the small cycle sets, the only possibility is $|X|=4$, and there is only one such cycle set, up to isomorphisms.
\end{proof}

\begin{cor}
        Let $X$ be a cycle set such that the diagonal map fixes more than $\frac{|X|}{2}+1$ points. Then, $X$ is not latin. 
\end{cor}
\begin{proof}
    It follows from Lemma \ref{lemmlat}.
\end{proof}

\section*{Competing Interests and Funding}

The authors have not disclosed any competing interests.

\bibliographystyle{plainurl}
\bibliography{reference.bib} 

\end{document}